\theoremstyle{plain}
\newtheorem{theorem}{Theorem}
\newtheorem{corollary}{Corollary}[theorem] % Use theorem counter as `parent`
\newtheorem{lemmaX}{Lemma}
\newtheorem{example}{Example}
\newtheorem{remark}{Remark}
\theoremstyle{definition}
\newtheorem{definition}{Definition}
\title[Convergence of remote projections]{Convergence of remote projections \\ onto convex sets}
\author[P. A. Borodin]{Petr A. Borodin}
\address{Department of Mechanics and
Mathematics, Moscow State University, Moscow 119991, Russia}
\address{and}
\address{Moscow Center for Fundamental and Applied Mathematics}
\email{pborodin@inbox.ru}
\author[E. Kopeck\'a]{ Eva Kopeck\'a}
\address{Universit\"at Innsbruck \\
Department of Mathematics\\
 Technikerstra\ss e 13  \\
6020 Innsbruck, Austria}
\email {eva.kopecka@uibk.ac.at}
\keywords{Hilbert space, metric projection, convex set, convergence, symmetry}
\subjclass[2010]{Primary: 46C05, 47J25 Secondary: 41A65}
\thanks{The first author was supported by the Russian Science Foundation (grant no. 23-71-30001) in Moscow State University.}
\begin{document}

\begin{abstract}
Let $\{C_{\alpha}\}_{\alpha\in \Omega}$ be a family of closed and convex  sets in  a Hilbert space $H$, having a nonempty intersection $C$. We consider a sequence $\{x_n\}$  of remote projections onto  them. This means,
$x_0\in H$,  and $x_{n+1}$   is  the projection of $x_n$ onto
such a set $C_{\alpha(n)}$ that the ratio of the distances from $x_n$ to this set and to any other  set from the family is at least  $t_n\in [0,1]$. 
We study properties of the weakness parameters  $t_n$ and of  the sets $C_\alpha$  which ensure the  norm or weak convergence of the sequence $\{x_n\}$   to a point in $C$. 
We show that condition (\ref{teml_cond})  is  necessary and sufficient for the norm convergence of $x_n$ to a point in $C$ for any starting element and any family of closed, convex, and symmetric sets $C_\alpha$.
This generalizes a result of Temlyakov who introduced (T) in the context of  greedy approximation theory. 
We  give examples explaining to what extent the symmetry condition on the sets $C_{\alpha}$ can be dropped. 
Condition (T)  is stronger than $\sum t_n^2=\infty$ and weaker than $\sum t_n/n=\infty$.
The condition $\sum t_n^2=\infty$  turns out to be necessary and sufficient for the sequence $\{x_n\}$ to have a partial weak limit in $C$ for any family of closed and convex sets $C_\alpha$ and any starting element.
\end{abstract}

\maketitle

\section*{Introduction}

In the entire paper $H$ is a real Hilbert space; its
 scalar product we denote by $\langle\,\cdot\,,\cdot\,\rangle$ and the corresponding  norm by $|\,\cdot\,|$.

Let $\{C_\alpha\}_{\alpha\in \Omega}$ be a family of closed and convex sets in  $H$, $|\Omega|\geq 2$, so that  $C=\bigcap_{\alpha\in \Omega}C_\alpha\not=\emptyset$.  Let $P_\alpha$ denote the metric projection onto $C_\alpha$. Sometimes we also denote the metric projection onto a closed convex set $A$  by $P_A$.

A fixed sequence $\{\alpha(n)\}\subset \Omega$ and a starting element $x_0\in H$ generate the sequence
$x_{n+1}=P_{\alpha(n)} x_n$, $n=0,1,2,\dots$, of consecutive projections  that can be examined for convergence.

In the case when $C_\alpha$ are closed linear subspaces of $H$ and $\Omega$ is a finite set the convergence properties of the sequence $\{x_n\}$ are well understood. If the sequence of indices $\{\alpha(n)\}$ is cyclic, that is, $\Omega=\{0, 1,\dots, K-1\}$ and $\alpha(n)\equiv n\,(\hbox{mod}\,K)$, then $\{x_n\}$ converges in norm to a point of $C$~\cite{N}, \cite{Ha}. The rate of convergence depending on the position of the subspaces and  of the initial point can be estimated (see~\cite{BK20}, \cite{RZ}, and the bibliography there). Already for three closed linear subspaces divergence might occur if no extra information about the sequence of indices, or about the geometry  of the  subspaces is known  (see~\cite{KM}, \cite{KP}, and \cite{K20}). The sequence $\{x_n\}$, however, always converges weakly to a point in $C$ if $\Omega$ is finite, each $\alpha\in \Omega$ occurs infinitely many times in the sequence $\{\alpha(n)\}$ and the sets $C_\alpha$ are closed linear subspaces~\cite{AA}.

In the lack of linearity, when the sets $C_\alpha$ are just closed and convex, the situation is different.
Already for $|\Omega|=2$ the sequence $\{x_n\}$ might diverge in norm  (\cite{H}, see also \cite{K} and \cite{MR}).
Weak convergence is known only under additional conditions~\cite{BK22}: when $|\Omega|\leq 3$ \cite{DR}, or when $|\Omega|<\infty$ and the indices $\{\alpha(n)\}$ are cyclic~\cite{Bre}, or when $|\Omega|<\infty$ and the sets are ``somewhat symmetric''   \cite{DR}, or when $\Omega$ is arbitrary but each successive projection takes place on the farthest set~\cite{Bre}.  In this article we develop the last plot.

For any starting element $x_0\in H$, we consider the sequence of {\em remote} projections
\begin{equation}\label{rem}
x_{n+1}=P_{\alpha(n)}x_n, \qquad n=0,1,2,\dots,
\end{equation}
where $\alpha(n)\in \Omega$ is chosen so that
$$
{\rm dist\,}(x_n,C_{\alpha(n)})\ge t_n\sup_{\alpha} {\rm dist\,}(x_n,C_\alpha),
$$
and $t_n\in [0,1]$ are prescribed {\em weakness parameters}. If there is at least one $n\in {\mathbb N}$    with $t_n=1$, that is, when the $n$th projection is the  {\em remotest}, we require the maximum  $\max_{\alpha} {\rm dist\,}(x,C_\alpha)$  to be attained for each $x\in H$. Note that any sequence of consecutive projections onto the family $\{C_\alpha\}$ can be regarded as a sequence of remote projections with some,   possibly very small, $t_n$'s.

We prove a convergence result for the remote projections (\ref{rem}). If all the convex sets  $C_\alpha$ are closed subspaces of codimension one, this convergence theorem is already known within the greedy approximation theory~\cite{T}.

We recall the corresponding definitions.
A subset $D$ of the unit sphere $S(H)=\{s\in H: |s|=1\}$ is called a {\it dictionary} if $\overline{{\rm span\,}} D=H$.
For any dictionary $D\subset S(H)$, any sequence $\{t_n\}$ in $[0,1]$ of weakness parameters, and any $x_0\in H$, the {\em Weak Greedy
Algorithm} (WGA) generates a sequence $x_n$ defined inductively
by
\begin{equation}\label{greal}
x_{n+1}=x_n-\langle x_n,g_n\rangle g_n, \qquad n=0,1,2,\dots,
\end{equation}
where the element $g_n\in D$ is such that
$$
|\langle x_n,g_n\rangle |\ge t_n\sup\{|\langle x_n,g\rangle|: g\in D\}.
$$
The weakness parameters were introduced  by Temlyakov in \cite{T00}.
In the case when  $t_n\equiv 1$ of the {\em Pure Greedy Algorithm}, or even if $t_n=1$ for at least one  $n$,  we  require the maximum $\max \{|\langle x_n,g\rangle|: g\in D\}$ to be attained for each $x\in H$. We say  that the WGA converges if $|x_n|\to 0$.

Clearly,   the WGA coincides with the process of remote projecting onto the family of hyperplanes $\{g^\perp: g\in D\}$ orthogonal to the dictionary elements. Since $D$ is spanning, the origin is the only point in the intersection  of these hyperplanes. The other way round it works as well.

\begin{remark}\label{WGA}
Let $\{C_\alpha\}$ be a family of closed linear subspaces with $\bigcap_\alpha C_\alpha=\{0\}$. Then $\bigcup_\alpha C_\alpha^{\perp}$ is spanning and the remote projections (\ref{rem}) correspond to the WGA 
with respect to the dictionary $\bigcup_\alpha C_\alpha^\perp\cap S(H)$ with the same sequence $\{t_n\}$ of weakness parameters.
\end{remark}

According to Temlyakov \cite{T02} the condition
\begin{equation}\tag{T}\label{teml_cond}
\forall \{a_\nu\}\in \ell_2 \mbox{ with }  a_\nu\ge 0:\, \liminf_{m\to \infty} \frac{a_m}{t_m}\sum_{\nu=1}^m a_\nu=0
\end{equation}
on the sequence $\{t_m\}$ is necessary and sufficient for the convergence of all realizations of the WGA with the weakness sequence $\{t_m\}$
for each $x_0\in H$ and all dictionaries $D\subset S(H)$.

The condition (\ref{teml_cond}) is  rather subtle. For instance,
$$
\sum \frac{t_m}{m}=\infty \, \Rightarrow\,  {\rm (\ref{teml_cond})}\,  \Rightarrow\,  \sum t_m^2=\infty,
$$
but none of the implications can be reversed.

In Section~\ref{norm} we generalize the convergence theorem of \cite{T02} to the case of remote projections onto a family of closed convex sets $C_\alpha$ that are uniformly quasi-symmetric with respect to their common point; see Theorem~\ref{theorem1} below.
Our proof partially leans on
 Temlyakov's paper~\cite{T02} and  on  the proof of Jones' theorem on the convergence of the Pure Greedy Algorithm in Hilbert space~\cite{J}, \cite[Ch.\,2]{T}. The quasi-symmetry condition is essential in view of Hundal's example~\cite{H}, \cite{K}.
 In Corollary~\ref{almostperiodic} we   show that  cyclic projections onto finitely many closed, convex and quasi-symmetric sets converge in norm. This generalizes a result from \cite{BBR}.

 In Section~\ref{sym} we discuss different versions of the quasi-symmetry condition, and show that the uniform quasi-symmetry is   essential in Theorem~\ref{theorem1}.

Section~\ref{weak} is devoted to the weak convergence of remote projections. In Theorem~\ref{theorem2} we  give a condition on $\{t_n\}$ sufficient for weak convergence to a point in $C$. 
As Corollary~\ref{walmostperiodic} we  get a result of \cite{DR}:    quasi-periodic projections onto finitely many closed and convex   sets converge  weakly to a point in their intersection.
In Theorem~\ref{theorem3} we give  another condition on $\{t_n\}$ that is necessary and sufficient for all sequences of remote projections to have a partial weak limit in $C$. We construct an example showing  certain sharpness of these theorems.

\section{Norm convergence}\label{norm}

In this section we show when remote projections onto a family of closed and convex sets converge. A symmetry assumption on the sets is needed;  we define this weakened symmetry below.
In Section~\ref{sym} we will show to what extent this symmetry condition is necessary. We will also compare it to another weakened symmetry condition.

In what follows $B(a,r)$ denotes the closed ball with center $a$ and radius $r>0$.

\begin{definition}\label{quasym}
 Let $C$ and $C_\alpha$, $\alpha\in \Omega$, be   closed convex sets in a Banach space $X$, all containing the origin.
\begin{enumerate}
\item[(i)] We call the set $C$ {\em quasi-symmetric}, if \\
$\forall r>0 \ \exists\, \theta=\theta(r)\in(0,1]:  \   x\in C\cap B(0,r)\ \Rightarrow
    -\theta x \in C$.
\item[(ii)] We say that the family of sets $\{C_\alpha\}_{\alpha\in \Omega}$ is uniformly quasi-symmetric if \\
$\forall r>0 \ \exists\, \theta=\theta(r)\in(0,1]  \, \forall \alpha:\  x\in C_\alpha\cap B(0,r)
     \Rightarrow -\theta x\in C_\alpha$.
\end{enumerate}
Moreover, we say that $C$ is  {\em quasi-symmetric} with respect to a point $a\in C$ if the set  $(C-a)$ is quasi-symmetric.
Similarly, the family of sets $\{C_\alpha\}_{\alpha\in \Omega}$ is uniformly quasi-symmetric with respect to a point $a\in \bigcap_{\alpha\in \Omega} C_\alpha$ if the family $\{C_\alpha-a\}_{\alpha\in \Omega}$ is uniformly quasi-symmetric.
\end{definition}

In the above definition we   can equivalently write ``$\exists\,r>0$'' instead of ``$\forall\,r>0$''. Indeed, $\theta(r)=\theta_0\min\{1,r_0/r\}$ works  for a given $r>0$, if $\theta_0$  works for  some $r_0>0$.

\begin{theorem}\label{theorem1} 
For a sequence $\{t_n\}_{n=0}^\infty \subset [0,1]$, the following two statements are equivalent:
\begin{enumerate}
\item[(i)] The sequence $\{t_n\}$ satisfies the condition (\ref{teml_cond}).
\item[(ii)]  For any family $\{C_{\alpha}\}_{\alpha\in \Omega}$ of closed and convex sets  in a Hilbert space $H$ which is uniformly quasi-symmetric with respect to a point $a\in C=\bigcap_{\alpha\in \Omega} C_\alpha$ and for any starting element $x_0\in H$ the sequence $\{x_n\}$ of remote projections (\ref{rem})  
converges in norm to a point in  $C$.
\end{enumerate}
\end{theorem}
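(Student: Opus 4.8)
The plan is to prove the two implications (i)$\Rightarrow$(ii) and (ii)$\Rightarrow$(i) separately, with the first being the substantial half.

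For (i)$\Rightarrow$(ii), I would first reduce to the case $a=0$ by translating, and I would record the basic facts about metric projections onto closed convex sets containing $0$: the Pythagorean-type inequality $|x_{n+1}|^2 \le |x_n|^2 - {\rm dist}(x_n,C_{\alpha(n)})^2$, so that $\{|x_n|\}$ is nonincreasing, hence convergent, and $\sum_n {\rm dist}(x_n,C_{\alpha(n)})^2 < \infty$. The main point is then to show $|x_n| \to 0$; once the iterates approach the intersection $C$ of all the sets, a standard Fej\'er-monotonicity argument (the sequence $\{|x_n - c|\}$ is nonincreasing for every $c \in C$) upgrades this to norm convergence to a single point of $C$. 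To force $|x_n|\to 0$, I would adapt the Temlyakov--Jones machinery: set $a_n = {\rm dist}(x_n, C_{\alpha(n)})$, so $\{a_n\}\in \ell_2$, and derive a lower bound for $a_n$ in terms of $|x_n|$, the weakness parameter $t_n$, and the \emph{quasi-symmetry constant} $\theta = \theta(r)$ corresponding to the radius $r = |x_0|$ (within which all iterates live). Here is where quasi-symmetry enters: if $x_n \notin C$, there is some $\alpha$ with ${\rm dist}(x_n,C_\alpha)$ bounded below in terms of $|x_n|$ — this is the convex-geometry lemma that replaces the dictionary estimate $\sup_g |\langle x_n,g\rangle|$ in the linear WGA case. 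Indeed, for a single quasi-symmetric set $C_\alpha$, the projection $P_\alpha x_n$ and its reflection $-\theta P_\alpha x_n \in C_\alpha$ sandwich a chord through a point near $x_n$, which yields $\langle x_n, x_n - P_\alpha x_n\rangle \gtrsim \theta |x_n|^2$ or a comparable bound; combined with ${\rm dist}(x_n, C_{\alpha(n)}) \ge t_n \sup_\alpha {\rm dist}(x_n, C_\alpha)$ this gives $a_n \gtrsim \theta t_n |x_n|$. Plugging into the monotone chain $|x_{n+1}|^2 \le |x_n|^2 - a_n^2$ and iterating, together with the telescoping bound $\sum_{\nu\le m} a_\nu^2 \le |x_0|^2$, produces exactly the quantity $\frac{a_m}{t_m}\sum_{\nu=1}^m a_\nu$ (up to the constant $\theta^2$ and square roots); if $|x_n| \not\to 0$, condition (\ref{teml_cond}) applied to $\{a_\nu\}$ is contradicted. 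The delicate bookkeeping — matching the exponents and the $\liminf$ in (\ref{teml_cond}) against the Jones-type recursion — is where I would follow \cite{T02} and \cite[Ch.\,2]{T} closely.

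For (ii)$\Rightarrow$(i), I would argue contrapositively: if $\{t_n\}$ fails (\ref{teml_cond}), Temlyakov's result \cite{T02} provides a dictionary $D \subset S(H)$ and a starting point for which some realization of the WGA does not converge. By Remark~\ref{WGA} (and the discussion preceding it identifying the WGA with remote projecting onto the hyperplanes $\{g^\perp : g\in D\}$), this WGA realization \emph{is} a sequence of remote projections onto a family of closed linear subspaces — which are in particular closed, convex, and symmetric about $0$, hence trivially uniformly quasi-symmetric with $\theta \equiv 1$. Thus (ii) fails for this family, completing the contrapositive. One technical point to check is that Temlyakov's counterexample can be taken with the weakness parameters matching $\{t_n\}$ exactly and, if some $t_n = 1$, that the relevant suprema are attained — but this is built into the hypotheses of both (\ref{teml_cond})-type statements and the setup of \cite{T02}.

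The main obstacle I anticipate is the convex-geometry estimate $a_n \gtrsim \theta(|x_0|)\, t_n\, |x_n|$: in the linear case one has the clean identity ${\rm dist}(x_n, g^\perp) = |\langle x_n, g\rangle|$, and spanning-ness of $D$ gives $\sup_g|\langle x_n,g\rangle| \ge$ (something)$\cdot|x_n|$, but for general quasi-symmetric convex sets one must extract a comparable lower bound for $\sup_\alpha {\rm dist}(x_n, C_\alpha)$ purely from the fact that $0 \in \bigcap_\alpha C_\alpha$ and $x_n \ne 0$, using the reflection property $-\theta x \in C_\alpha$ for $x \in C_\alpha \cap B(0,r)$. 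Getting the dependence on $\theta$ and on $|x_n|$ right here — and verifying it survives the induction with a constant uniform in $n$ — is the crux; everything else is a faithful transcription of the Temlyakov--Jones argument.
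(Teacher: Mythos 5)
Your implication (ii)$\Rightarrow$(i) is fine and is exactly what the paper does: quote Temlyakov's necessity result for hyperplanes, which are symmetric and hence uniformly quasi-symmetric. The problem is the main implication (i)$\Rightarrow$(ii), where your plan rests on two claims that are false. First, the goal ``$|x_n|\to 0$'' is not the right goal: the limit of the remote projections is some point of $C$, not necessarily the symmetry center $a$ (here $0$), so $|x_n|$ in general decreases to a positive limit $R$ even when everything converges as claimed (e.g.\ two half-planes in ${\mathbb R}^2$ whose intersection is a large symmetric set and $x_0$ far from $0$ but close to $C$). Second, the ``convex-geometry lemma'' you identify as the crux, $\sup_\alpha{\rm dist\,}(x_n,C_\alpha)\gtrsim \theta\,|x_n|$ (equivalently $a_n\gtrsim\theta t_n|x_n|$, or $\langle x_n,x_n-P_\alpha x_n\rangle\gtrsim\theta|x_n|^2$), is simply false: if $x_n$ lies in, or very close to, the common set $C$ but far from the origin, all the distances ${\rm dist\,}(x_n,C_\alpha)$ are arbitrarily small while $|x_n|$ stays large. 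Even in the linear WGA setting no bound of the form $\sup_{g\in D}|\langle x_n,g\rangle|\ge c\,|x_n|$ holds for a general spanning dictionary (such a bound would give geometric convergence of the Pure Greedy Algorithm, which fails), and the Jones--Temlyakov proof does not use one; it is a Cauchy-sequence argument, not the recursion $|x_{n+1}|^2\le|x_n|^2-c\,\theta^2t_n^2|x_n|^2$ you propose to iterate. So the step you flag as the ``main obstacle'' is not a technical obstacle but the point where the approach breaks.

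What the paper actually does instead: with $y_\nu=x_\nu-x_{\nu+1}$ and $\varepsilon_\nu$ the excess of the angle $\angle\, 0x_{\nu+1}x_\nu$ over $\pi/2$, one shows $\sum|y_\nu|^2<\infty$ and $\sum|y_\nu|\sin\varepsilon_\nu<\infty$ (the latter needs only $|x_n|\downarrow R>0$). The supporting-hyperplane inequality gives the one-sided bound $\langle y_\nu,z\rangle\le|y_\nu||x_0|\sin\varepsilon_\nu$ for all $z\in C_{\alpha(\nu)}$, and uniform quasi-symmetry (with $\theta=\theta(2|x_0|)$) upgrades it to the two-sided bound $|\langle y_\nu,z\rangle|\le\theta^{-1}|y_\nu||x_0|\sin\varepsilon_\nu$ on $C_{\alpha(\nu)}\cap B(0,2|x_0|)$ --- this, not a lower bound on distances, is the only place symmetry is used. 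Combining with the remoteness inequality $|y_m|\ge t_m\,{\rm dist\,}(x_m,C_{\alpha(\nu)})$ yields $|\langle x_m,y_\nu\rangle|\le |y_m||y_\nu|/t_m+\theta^{-1}|x_0||y_\nu|\sin\varepsilon_\nu$; condition (\ref{teml_cond}) applied to $\{|y_\nu|\}$ produces a subsequence along which $\frac{|y_m|}{t_m}\sum_{\nu\le m}|y_\nu|\to 0$, and plugging all this into $|x_n-x_m|^2=|x_n|^2-|x_m|^2-2\sum_{\nu=n}^{m-1}\langle y_\nu,x_m\rangle$ shows $\{x_n\}$ is Cauchy. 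Only after norm convergence to some $w$ is established does one show $w\in\bigcap_\alpha C_\alpha$, and for that step the full strength of (\ref{teml_cond}) is not needed, just its consequence $\sum t_n^2=\infty$: if ${\rm dist\,}(x_n,C_\beta)>\delta$ eventually, then $|x_{n+1}|^2\le|x_n|^2-t_n^2\delta^2$ contradicts $|x_n|\downarrow R$. Your Fej\'er-monotonicity remark is correct as far as it goes, but the route to it that you sketch does not work; you would need to reorganize the argument along these lines.
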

\begin{proof}
We will prove here  that (i) $\Rightarrow$ (ii).  The implication (ii) $\Rightarrow$ (i) follows from \cite{T02} where Temlyakov shows that the condition  (\ref{teml_cond}) is necessary already when all $C_{\alpha}$'s are hyperplanes.

 1. We can  assume that $a=0$. Let $x_n-x_{n+1}=y_n$, $\angle 0x_{n+1}x_n=\pi/2+\varepsilon_n$. We have $\varepsilon_n \in [0,\pi/2]$; otherwise $x_{n+1}$ is not the nearest point for $x_n$ in the segment $[0,x_{n+1}]$ and hence also in $C_{\alpha(n)}$. Consequently, $|x_n|^2\ge |x_{n+1}|^2+|y_n|^2$, so that the norms $|x_n|$ decrease to some $R\ge 0$. We suppose $R>0$, otherwise $x_n\to 0$. Moreover,
\begin{equation}\label{y_n}
\sum_{n=0}^\infty |y_n|^2\le \sum_{n=0}^\infty (|x_n|^2-|x_{n+1}|^2)<\infty.
\end{equation}
Consequently, since $\{t_m\}$ satisfies (\ref{teml_cond}), we can choose a subsequence $\Lambda\subset {\mathbb N}$ with the property
\begin{equation}\label{Lambda}
b_m:=\frac{|y_m|}{t_m}\sum_{\nu=0}^m |y_\nu|\to 0, \qquad m\to\infty,\,  m\in \Lambda.
\end{equation}

2. Now we prove that
\begin{equation}\label{sin}
\sum_{n=0}^\infty |y_n| \sin \varepsilon_n<\infty.
\end{equation}

By the law of cosines,
\begin{equation}\notag
\begin{split}
|x_n|^2&= |x_{n+1}|^2+|y_n|^2-2|x_{n+1}||y_n|\cos \left(\frac{\pi}{2}+\varepsilon_n\right) \\
&= |x_{n+1}|^2+|y_n|^2+2|x_{n+1}||y_n|\sin \varepsilon_n,
\end{split}
\end{equation}
so that
$$
|y_n|\sin \varepsilon_n=\frac{|x_n|^2- |x_{n+1}|^2-|y_n|^2}{2|x_{n+1}|}\le \frac{|x_n|^2- |x_{n+1}|^2}{2R},
$$
and (\ref{sin}) follows.

3. The vector $y_\nu$ is externally normal to a hyperplane supporting the set $C_{\alpha(\nu)}$ at the point $x_{\nu+1}$. Hence,
\begin{equation}\label{ext_normal}
\langle y_\nu,z-x_{\nu+1}\rangle \le 0
\end{equation}
for any $z\in C_{\alpha(\nu)}$, so that
\begin{equation}\notag
\begin{split}
\langle y_\nu, z\rangle &\le \langle y_\nu, x_{\nu+1} \rangle = |y_\nu| | x_{\nu+1}|\cos \left( \frac{\pi}{2}-\varepsilon_\nu\right) \\
&=|y_\nu| | x_{\nu+1}|\sin \varepsilon_\nu \le |y_\nu| | x_0|\sin \varepsilon_\nu.
\end{split}
\end{equation}

Since $C_{\alpha(\nu)}$ is quasi-symmetric with respect to 0, we get
\begin{equation}\label{z}
|\langle y_\nu, z\rangle |\le \theta^{-1}|y_\nu| | x_0|\sin \varepsilon_\nu, \qquad z\in C_{\alpha(\nu)}\cap B(0,2|x_0|),
\end{equation}
where $\theta=\theta(2|x_0|)$ is from the  definition of uniform quasi-symmetry.

4. Next, for any $m$ and $\nu$,
\begin{equation}\notag
\begin{split}
|y_m|&={\rm dist\,}(x_m,C_{\alpha(m)})\ge t_m\sup_{\alpha} {\rm dist\,}(x_m,C_\alpha)\ge t_m{\rm dist\,}(x_m,C_{\alpha(\nu)}) \\
&=t_m \min_{z\in C_{\alpha(\nu)}} |x_m-z|=t_m \min_{z\in C_{\alpha(\nu)}\cap B(0,2|x_0|)} |x_m-z| \\
&\ge t_m\min_{z\in C_{\alpha(\nu)}\cap B(0,2|x_0|)} |\langle x_m-z, y_\nu/|y_\nu|\rangle| \\
&\ge t_m \left( |\langle x_m, y_\nu/|y_\nu|\rangle|-\max_{z\in C_{\alpha(\nu)}\cap B(0,2|x_0|)} |\langle z, y_\nu/|y_\nu|\rangle|\right) \\
&\ge  t_m \left(|\langle x_m, y_\nu/|y_\nu|\rangle|-\theta^{-1}|x_0|\sin \varepsilon_\nu\right);
\end{split}
\end{equation}
 we have  used (\ref{z}) in the last inequality. The above estimate implies that
\begin{equation}\label{m_nu}
|\langle x_m, y_\nu\rangle |\le \frac{|y_m| |y_\nu|}{t_m} +\theta^{-1} | x_0||y_\nu|\sin \varepsilon_\nu
\end{equation}
for any $m, \nu =0,1,2,\dots$

5. Now we  prove that $\{x_n\}$ is a Cauchy sequence and hence it converges to some $w\in H$. Given any $n,k\in {\mathbb N}$ assume that $m \in \Lambda$ and $m>\max\{n,k\}$.
Since $|x_n-x_k|\le |x_n-x_m|+|x_k-x_m|$, it is enough to show  that
$|x_n-x_m|\to 0$ as $n,m \to \infty$, $n<m$ and $m\in \Lambda$.
We use the identity
$$
|x_n-x_m|^2= |x_n|^2-|x_m|^2-2\langle x_n-x_m, x_m  \rangle.
$$
Since $|x_n|\to R$, we have  $|x_n|^2-|x_m|^2\to 0$ as $n,m\to \infty$. The last term we estimate using (\ref{m_nu}) as follows:
\begin{equation}\notag
\begin{split}
|\langle x_n-x_m, x_m  \rangle|&= \left| \sum_{\nu =n}^{m-1} \langle y_\nu, x_m\rangle\right|\le \sum_{\nu =n}^{m-1} |\langle y_\nu, x_m\rangle| \\
&\le \frac{|y_m| }{t_m}\sum_{\nu =n}^{m-1} |y_\nu| +  \theta^{-1}| x_0| \sum_{\nu =n}^{m-1}  |y_\nu| \sin \varepsilon_\nu.
\end{split}
\end{equation}
The first sum does not exceed $b_m$ by (\ref{Lambda}), so it tends to 0 as $m\to \infty$, $m\in \Lambda$. The second sum tends to 0 as $n,m\to \infty$ in view of (\ref{sin}).

6. Finally  we show that $w=\lim x_n$ is  contained  in all $C_\alpha$'s. If  $w\notin C_\beta$ for some $\beta$, then
$$
{\rm dist\,}(x_n,C_\beta)>\delta>0
$$
for all $n\ge n_0$. This implies that
\begin{equation}\notag
\begin{split}
|x_{n+1}|^2&\le |x_n|^2-|y_n|^2=|x_n|^2-{\rm dist\,}(x_n,C_{\alpha(n)})^2 \\
&\le |x_n|^2-t_n^2{\rm dist\,}(x_n,C_\beta)^2\le |x_n|^2-t_n^2\delta^2 \\
&\le \dots \le |x_{n_0}|^2-\delta^2\sum_{\nu=n_0}^n t_\nu^2.
\end{split}
\end{equation}
Since  (\ref{teml_cond}) implies that $\sum t_\nu^2=\infty$, this  contradicts   $|x_n|\downarrow R>0$.
\end{proof}

Assume  $C_1, \dots, C_K$ are closed linear subspaces of $H$. Assume  $\{\alpha(n)\}$ is a quasi-periodic sequence of the indices $1,\dots, K$. This means that there is a constant $M\in {\mathbb N}$ so that for every interval $I$ of length $M$ the set $\{\alpha(n):\, n\in I\}$ contains all of the indices:
$$
\{\alpha(n):\, n\in I\}=\{1,\dots, K\}.
$$
Then the sequence $x_{n+1}=P_{\alpha(n)}x_n$ of projections converges in norm \cite{N}, \cite{Ha}, \cite{S}. Already for two closed and convex sets this is not true, as the example of Hundal exhibits \cite{H}, \cite{K}, \cite{MR}. Theorem \ref{theorem1} implies easily,
that as soon as the closed and convex sets $C_1, \dots, C_K$ are also  quasi-symmetric, convergence occurs.
For {\em symmetric} in place of {\em quasi-symmetric} this was established in \cite{BBR}.

\begin{corollary}\label{almostperiodic}
Assume  $C_1, \dots, C_K$ are finitely many closed, convex and  quasi-symmetric subsets of $H$ with a nonempty intersection $C=\bigcap_1^K C_j$. Assume  $\{\alpha(n)\}$ is a quasi-periodic sequence of the indices $1,\dots, K$.
Then the sequence $x_{n+1}=P_{\alpha(n)}x_n$ of nearest point projections converges in norm to a point in $C$ for any starting point $x_0\in H$.
\end{corollary}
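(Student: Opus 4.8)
The plan is to run the proof of Theorem~\ref{theorem1}, replacing the two places where condition~(\ref{teml_cond}) is used there by a substitute coming from quasi-periodicity. First I would reduce and set up. After a translation we may assume that $0\in C$ and each $C_j$ is quasi-symmetric with respect to $0$, and then the finite family $\{C_1,\dots,C_K\}$ is \emph{uniformly} quasi-symmetric with respect to $0$: if $\theta_j(r)$ works for $C_j$ and $\theta(r):=\min_{1\le j\le K}\theta_j(r)$, then $-\theta(r)x\in[0,-\theta_j(r)x]\subset C_j$ whenever $-\theta_j(r)x\in C_j$, because $C_j$ is convex and contains $0$. Since $0\in C_{\alpha(n)}$ for every $n$, the norms $|x_n|$ decrease to some $R\ge 0$; if $R=0$ we are done, so assume $R>0$. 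Writing $y_n=x_n-x_{n+1}$ and choosing $\varepsilon_n$ as in the proof of Theorem~\ref{theorem1}, Steps~1 and 2 there yield $\sum_n|y_n|^2<\infty$ and $\sum_n|y_n|\sin\varepsilon_n<\infty$. The new ingredient is this consequence of quasi-periodicity: for every $m$ and every $j\in\{1,\dots,K\}$ there is $m^*\in\{m,\dots,m+M-1\}$ with $\alpha(m^*)=j$, hence $x_{m^*+1}\in C_j$ and
\[
{\rm dist\,}(x_m,C_j)\le|x_m-x_{m^*+1}|\le\sum_{i=m}^{m+M-1}|y_i|=:S_m;
\]
in particular $D_m:=\max_{1\le j\le K}{\rm dist\,}(x_m,C_j)\le S_m$ for all $m$.

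Next I would note that $\{S_m\}$ can play the role that $\{|y_m|/t_m\}$ plays in Theorem~\ref{theorem1}. By Cauchy--Schwarz $S_m^2\le M\sum_{i=m}^{m+M-1}|y_i|^2$, and summing over $m$, where each $|y_i|^2$ is counted at most $M$ times, gives $\sum_m S_m^2\le M^2\sum_i|y_i|^2<\infty$; also $|y_m|\le S_m$. Since $\sum_m 1/m=\infty$, the constant sequence $t_m\equiv 1$ satisfies~(\ref{teml_cond}); applying~(\ref{teml_cond}) to the nonnegative $\ell_2$ sequence $\{S_\nu\}$ and using $|y_\nu|\le S_\nu$ gives
\[
\liminf_{m\to\infty}\Big(S_m\sum_{\nu=0}^m|y_\nu|\Big)\le\liminf_{m\to\infty}\Big(S_m\sum_{\nu=0}^m S_\nu\Big)=0,
\]
so I may fix $\Lambda\subset\mathbb{N}$ with $S_m\sum_{\nu=0}^m|y_\nu|\to 0$ as $m\to\infty$, $m\in\Lambda$. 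Now $\{x_n\}$ is a sequence of remote projections~(\ref{rem}) onto $\{C_1,\dots,C_K\}$ with weakness parameters $\hat t_m:=|y_m|/D_m\in(0,1]$ --- if $D_m=0$ for some $m$ then $x_m\in C$, the sequence is eventually constant, and we are done. Since $|y_m|/\hat t_m=D_m\le S_m$, the estimate~(\ref{m_nu}) of Step~4, whose derivation uses only the remote-projection inequality $|y_m|\ge\hat t_m\,{\rm dist\,}(x_m,C_{\alpha(\nu)})$ together with~(\ref{z}), becomes
\[
|\langle x_m,y_\nu\rangle|\le S_m|y_\nu|+\theta^{-1}|x_0|\,|y_\nu|\sin\varepsilon_\nu,\qquad m,\nu\ge 0,
\]
and the quantity $b_m$ from~(\ref{Lambda}) satisfies $b_m=D_m\sum_{\nu=0}^m|y_\nu|\le S_m\sum_{\nu=0}^m|y_\nu|\to 0$ on $\Lambda$.

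With these changes --- $t_m$ replaced by $\hat t_m$, the subsequence $\Lambda$ as above, and the displayed estimate in place of~(\ref{m_nu}) --- Steps~1--5 of the proof of Theorem~\ref{theorem1} go through, so $\{x_n\}$ is Cauchy and converges in norm to some $w\in H$. That $w\in C$ is now immediate: since $S_m\to 0$ we have $D_m\to 0$, hence ${\rm dist\,}(w,C_j)=\lim_m{\rm dist\,}(x_m,C_j)\le\lim_m D_m=0$ and $w\in C_j$ for each $j$, so $w\in C$. This replaces Step~6, which in Theorem~\ref{theorem1} used that~(\ref{teml_cond}) implies $\sum t_\nu^2=\infty$.

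The point that needs care is the second paragraph. A quasi-periodic projection sequence \emph{is} a sequence of remote projections, but its realized weakness parameters $\hat t_m$ need not satisfy~(\ref{teml_cond}): a step that projects onto a set the current iterate already (almost) lies in has $\hat t_m$ arbitrarily small, so Theorem~\ref{theorem1} cannot be quoted as a black box. The remedy is the elementary observation that the block sums $S_m$ dominate all the distances ${\rm dist\,}(x_m,C_j)$ and form an $\ell_2$ sequence --- precisely what is needed both to extract $\Lambda$ and to close the Cauchy estimate --- while the divergence $\sum t_\nu^2=\infty$ used in Step~6 is replaced by the direct fact $D_m\to 0$.
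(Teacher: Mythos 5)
Your argument is correct, but it takes a genuinely different route from the paper, and your stated reason for the detour is mistaken. You claim Theorem~\ref{theorem1} cannot be quoted as a black box because the realized weakness parameters $\hat t_n=|y_n|/D_n$ of a quasi-periodic sequence need not satisfy (\ref{teml_cond}). The paper's proof consists precisely in showing that they do: it defines $b_n=\max_k{\rm dist\,}(x_n,C_k)$ and $t_n=|x_{n+1}-x_n|/b_n$ (your $\hat t_n$), and argues by contradiction that if all of $t_m,\dots,t_{m+M}$ were below $1/(6M)$, then by the triangle inequality and nonexpansiveness $b_{n+1}\le(1+\frac{1}{3M})b_n$ on the window, so the iterates move by less than $b_m$ in total and never reach the set realizing $b_m$ --- contradicting quasi-periodicity, which forces that set to be visited within the window. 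Hence every window of length $M$ contains an index with $t_n\ge 1/(6M)$, so $\sum t_n/n=\infty$, which implies (\ref{teml_cond}), and Theorem~\ref{theorem1} applies verbatim. Individual $\hat t_n$ being arbitrarily small is not an obstruction, since (\ref{teml_cond}) requires no uniform lower bound.

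Your workaround is nevertheless sound and self-contained: quasi-periodicity gives ${\rm dist\,}(x_m,C_j)\le S_m=\sum_{i=m}^{m+M-1}|y_i|$, the $S_m$ are square-summable, the extraction of $\Lambda$ via (\ref{teml_cond}) for $t\equiv 1$ (or directly from $\sum 1/m=\infty$) is legitimate, the modified form of (\ref{m_nu}) with $D_m\le S_m$ in place of $|y_m|/t_m$ carries Steps 1--5 of the proof of Theorem~\ref{theorem1} through, and $D_m\le S_m\to 0$ makes the membership of the limit in $C$ immediate, replacing Step 6. What each approach buys: the paper's is a short reduction that uses Theorem~\ref{theorem1} as stated and yields the extra information that quasi-periodic projections always realize weakness parameters with $\sum t_n/n=\infty$; yours avoids verifying (\ref{teml_cond}) for the realized parameters at the cost of re-opening the proof of the theorem. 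Two harmless slips: $\hat t_m=0$ when $|y_m|=0$ (immaterial, since you only use ${\rm dist\,}(x_m,C_{\alpha(\nu)})\le D_m\le S_m$), and under Definition~\ref{quasym} ``quasi-symmetric'' already refers to the origin, so no translation is needed --- finiteness then gives uniform quasi-symmetry exactly as in the paper.
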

\begin{proof}
The given sets are  quasi-symmetric and there are only finitely many of them, so the family is uniformly quasi-symmetric.
We will show there are weakness parameters $t_n\in [0,1]$ satifying $\sum t_n/n=\infty$ so that the sequence $\{x_n\}$ corresponds to a sequence of remote projections with these parameters. Hence according to Theorem~\ref{theorem1} the sequence $\{x_n\}$ converges in norm.

We choose  $\beta(n)\in \{1,\dots,\,K\}$ and define $b_n>0$ and $t_n\in [0,1]$ as follows:
\begin{equation}\notag
\begin{split}
{\rm dist\,}(x_n,C_{\beta(n)})&=\max_{k} {\rm dist\,}(x_n,C_k)=b_n, \\
t_n&=|x_{n+1}-x_n|/b_n.
\end{split}
\end{equation}
We will prove that for each interval $I$ of length $M$ there is an $n\in I$ so that $t_n\geq  1/(6M)$ and hence $\sum t_n/n=\infty$; here $M$ is the constant of the quasi-periodicity of $\{\alpha_n\}$.

Assume for a contradiction that there is $m\in {\mathbb N}$  so that $t_{m+j}<1/(6M)$ for all $j\in \{0,\dots, M\}$. We will show that then $\beta(m)\notin\{\alpha(m+j):\, j= 0,\dots, M\}$ contradicting the sequence $\{\alpha(n)\}$ of indices being quasi-periodic with constant $M$.
Indeed, by the triangle inequality,
\begin{equation}\notag
\begin{split}
b_{n+1}&=|x_{n+1}-P_{\beta(n+1)}x_{n+1}| \\
&\leq |x_{n+1}-x_n|+|x_n-P_{\beta(n+1)}x_n|+|P_{\beta(n+1)}(x_n-x_{n+1})| \\
&\leq \dfrac{b_n}{6M}+b_n+\dfrac{b_n}{6M}=\left(1+\dfrac1{3M}\right)b_n,
\end{split}
\end{equation}
for $m\leq n\leq m+M-1$. By induction, for any $1\leq k\leq M$,
$$
b_{m+k}\leq \left(1+\dfrac1{3M}\right)^Mb_m\leq 2b_m.
$$
Again by triangle inequalities
$$
|x_m-x_{m+k}|\leq \dfrac{k}{6M}2b_m<b_m,
$$
hence $x_{m+k}\notin C_{\beta(m)}$.
\end{proof}

Assume $\{C_\alpha\}_{\alpha\in \Omega}$ is a family of closed  subspaces in  $H$ and that $|\Omega|\geq 2$ is at most countable.  Assume that in a   sequence $\{\alpha(n)\}\subset \Omega$ each element of $\Omega$ appears infinitely many times. The sequence of consecutive projections  $x_{n+1}=P_{\alpha(n)} x_n$, $n=0,1,2,\dots$, $x_0\in H$, generated by $\alpha$ does not have to converge in general. However, if the norm limit (or even just the weak limit) of the sequence   exists, then it is equal to $P_Cx_0$, where $C=\bigcap_\alpha C_\alpha$.

Already for three closed subspaces $C_1, C_2, C_3$ and the sequence of remote projections (\ref{rem}) we can choose some of the weakness parameters $t_n\in [0,1]$ so small that the subspace $C_3$ can be completely avoided. This causes   (\ref{rem}) to converge to $P_{C_1\cap C_2}x_0$ which can be arranged to differ from $P_Cx_0$.

Already for two closed convex sets things can go awry even for the remotest projections, that is, if in (\ref{rem}) we set $t_n=1$ for all $n\in {\mathbb N}$.

\begin{example}\label{not_to_P_C_x_0}
In the Euclidean plane $H={\mathbb R}^2$ there are two closed, convex and  symmetric   sets $C_1$ and $C_2$,  and a starting point $x_0$ so that  the limit point of the remotest projections is not equal to $P_{C_1\cap C_2}x_0$.
\end{example}
\begin{proof}
In the coordinate representation $(s,t)$ of vectors in ${\mathbb R}^2$, we set
$$
C_1=\{s=0\}, \qquad C_2=\{s-2\le t \le s+2\}.
$$
The line $C_1$ and the stripe $C_2$ are both symmetric with respect to 0, and their intersection is the segment $C=\{(0,t): t\in [-2,2]\}$.
For the starting point $x_0=(-4,4)$, we have
$$
{\rm dist\,}(x_0, C_2)=3\sqrt2>4={\rm dist\,}(x_0,C_1).
$$
Hence  $x_1=P_2x_0=(-1,1)$ and
$x_2=P_1x_1=(0,1)\in C$, whereas $P_Cx_0=(0,2)$.
\end{proof}

Note that for finitely many closed convex sets there are special projection algorithms converging to the projection of the starting point onto their intersection~\cite[Ch.\, 30]{BC}.

\section{Symmetry conditions}\label{sym}

Dye and Reich~\cite{DR} introduced the following property of weakened symmetry.

\begin{definition}
Let $C$ be a closed convex set in a Banach space $X$. The origin is a {\em weak internal point} (shortly WIP) of $C$ if
\begin{equation}\label{wip}
      \forall x\in C \,\, \exists\, \delta=\delta(x)>0:\,    -\delta x \in C.
\end{equation}
Moreover, we say that $a\in X$ is a WIP-point of  $C$ if  the origin is a WIP-point of   the set  $(C-a)$.
\end{definition}

Clearly, the origin is a WIP-point of $C$ if and only if it is a WIP-point of $C_1=C\cap B(0,1)$. It is also easy to see that
the origin is a WIP-point of  a quasi-symmetric set: the condition (i) of Definition~\ref{quasym} seems to be stronger than (\ref{wip}). Surprisingly, the converse   is also true: $\delta(x)$ in (\ref{wip}) can be chosen independently of $x$ lying in the unit ball, say. Closed convex sets in Banach spaces cannot be too asymmetric.

\begin{remark}\label{quasisym}
Let $C$ be a closed  and convex set in a Banach space $X$. A point $a\in X$ is a weak internal point of $C$ if and only if $C$ is  quasi-symmetric with respect to $a$.
\end{remark}
\begin{proof} We show only the less obvious implication.
We assume that  $a=0$ and that  $C=C\cap B(0,1)$. We take the maximal possible $\delta$ which works in  (\ref{wip}):
for every $0\neq x\in C$ there exists $\delta(x)>0$ so that
\begin{enumerate}
\item[(i)] $-\delta(x)x\in C$;
\item[(ii)] if $\eta>\delta(x)$, then $-\eta x\notin C$.
\end{enumerate}
We claim that $\inf_{x\in C} \delta(x)>0$ and give an elementary proof of this fact first.
If not, then there are non-zero elements $e_n\in C$ having $\delta(e_n)<1/3^n$, $n\in {\mathbb N}$. Then
$$
e=\sum_{n=1}^\infty \frac{e_n}{2^n}\in C
$$
and we may assume $e\not=0$; otherwise we take $(1-\varepsilon)e_1$ instead of $e_1$ for sufficiently small $\varepsilon>0$.
Then $-\delta e\in C$ for some $\delta>0$.
For a  fixed $k\in {\mathbb N}$ we observe that
$$
\frac{1}{1+\delta(1-1/2^k)}+\sum_{{\mathbb N}\ni n\not=k}\frac{1}{1+\delta(1-1/2^k)}\frac{\delta}{2^n}=1;
$$
all the summands  on the left-hand side  are positive. Consequently,
$$
\frac{1}{1+\delta(1-1/2^k)}(-\delta e)+\sum_{{\mathbb N}\ni n\not=k}\frac{1}{1+\delta(1-1/2^k)}\frac{\delta}{2^n}e_n\in C,
$$
that is,
$$
\frac{1}{1+\delta(1-1/2^k)}\left(-\delta \sum_{n=1}^\infty \frac{e_n}{2^n}+\sum_{{\mathbb N}\ni n\not=k}\frac{\delta}{2^n}e_n\right)=\frac{-\delta/2^k}{1+\delta(1-1/2^k)}e_k\in C.
$$
Hence,
$$
\frac{\delta/2^k}{1+\delta(1-1/2^k)}\leq \delta(e_k)<\frac{1}{3^k}.
$$
The last inequality implies that
$$
\frac{\delta}{2^k}<\frac{1+\delta}{3^k}-\frac{\delta}{6^k},
$$
which is impossible for  large $k$'s; how large exactly depends on $\delta$.

Here is a  ``Baire category'' proof of the fact that  $\inf_{x\in C} \delta(x)>0$ due to V.I.\,Bogachev.  According to~\cite[Proposition 2.5.1]{BS} both sets $C\cap(-C)$ and ${\rm conv\,} (C\cup (-C)) $  generate norms on ${\rm span\,} C$ in which ${\rm span\,} C$ is a Banach space. The open mapping theorem implies that the two  norms are equivalent, hence  the above infimum  is  positive.

\end{proof}

Next we exhibit that the {\em uniform} quasi-symmetry assumption on the sets $C_{\alpha}$ in Theorem~\ref{theorem1} is essential.
In \cite{H} and \cite{K}, an example of  a closed convex cone  $C$ with the vertex at the origin was constructed so that iterating the nearest point projection between $C$ and a hyperplane $D$  converges weakly but not in norm for a starting point $x_0\in D$. In the example the  hyperplane $D=e^\perp$ for an $0\neq e\in H=\ell_2$ and the set $C$ is the epigraph in $\ell_2=D+ {\rm span\,}\{e\}$
of a suitably chosen nonnegative convex sublinear function defined on $D$.
Those familiar with the example readily ``see'', that the family of closed convex sets consisting of $D$ and $C-c_ne$ for some suitable $c_n\searrow 0$ consists of quasi-symmetric sets for which the remote projections algorithm starting at $x_0$  closely traces the iterates of nearest points projections of $x_0$ between $C$ and $D$. Consequently it converges weakly but not in norm.
Rather than writing this up rigorously we give here a construction which is easier to present.

\begin{example}\label{nunifsym}
In any infinite dimensional Hilbert space $H$,
there exists a countable family of closed, convex  and quasi-symmetric sets    so that the sequence  of remotest projections on this family does  not converge in norm for a certain starting point.
\end{example}
\begin{proof} We assume that $H$ is separable as if it is not,  then we build the example in a closed separable infinite dimensional subspace of $H$.
Also, we construct a family of sets and a point in their intersection so that each set in the family is quasi-symmetric with respect to this point. To center at the origin, we translate, if need be.

We use as a building stone an example constructed in~\cite{B}; we first recall its relevant properties.

Let $\{e, e_k: k\in {\mathbb N}\}$ be an orthonormal basis of $H$.
For each $k\in {\mathbb N}$, we choose  vectors $v_1^k,\dots, v_{n_k}^k\in {\rm span\,}\{e_k, e_{k+1}\}$ as in \cite{B}. Their number  $n_k$ increases in a particular  way, the norms $|v_n^k|$ decrease  in a particular way. Their only property relevant here   are  their directions:
\begin{equation}\label{arg_v_n^k}
  \arg v_n^k=-\frac{\pi}{2}+\frac{\pi n}{n_k}, \qquad k\in {\mathbb N},  n=1,\dots,n_k;
\end{equation}
here the polar  angle $\arg$ in the plane ${\rm span\,}\{e_k, e_{k+1}\}$  is measured  from the positive direction of $e_k$.

The diverging greedy algorithm with respect to the dictionary containing  $\pm e$ and all vectors $(e+v_n^k)/|e+v_n^k|$  which is constructed in~\cite{B}  can be  interpreted as the process of remotest projections onto the family of closed convex sets consisting of the hyperplane $D=e^\perp$ and the half-spaces
$$
C_{n,k}=\{y\in H: \langle y, e+v_n^k\rangle \le 0\}.
$$
Starting with $x_0=e_1$, the remotest projections algorithm generates   $x_{m+1}=P_{C_{n,k}}x_m$ for even $m$ ($k$ and $n$ depending on $m$) and $x_{m+1}=P_{D}x_m$ for odd $m$. For all $m$ and $k$, the inequalities $\langle x_m,e_k\rangle\ge 0$ and $\langle x_m,e\rangle\le 0$ hold. The sequence $\{x_m\}$ converges  to 0 weakly but not in norm; for more details see~\cite{B}.

The hyperplane  $D$ and all the half-spaces $C_{n,k}$ are quasi-symmetric with respect to any point
$$
a\in D\cap \left(\cap_{n,k} C_{n,k}^\circ\right),
$$
where $C_{n,k}^\circ$ denotes the interior of the half-space $C_{n,k}$.
We define the coordinates of such a point
$$
a=(0,a^1,a^2,\dots)
$$
with respect to the  basis $\{e, e_k: k\in {\mathbb N}\}$, recursively:
$$
a^1=-1, a^{k+1}=a^k \tan \frac{\pi}{4n_k}.
$$
Clearly, $a\in \ell_2$, $a\in D$, and (\ref{arg_v_n^k}) implies that
$$
\langle a, e+v_n^k\rangle =\langle a, v_n^k\rangle =\langle a^ke_k+a^{k+1}e_{k+1}, v_n^k\rangle <0,
$$
since
$$
\arg (a^ke_k+a^{k+1}e_{k+1})=-\pi+\frac{\pi}{4n_k}
$$
in the plane ${\rm span\,}\{e_k, e_{k+1}\}$. Hence, $a\in C_{n,k}^\circ$ for each $k$ and $n$.
\end{proof}

If the interior of the  intersection of a family of closed convex sets is non-empty, then, clearly, the family is uniformly quasi-symmetric. In such a case, any sequence of projections onto these sets converges. For remote projections we even give  an estimate of the rate.

\begin{remark}\label{ball}
Let each closed convex set $C_\alpha$ contain the ball $B(a,r)$, $a\in H$, $r>0$.
\begin{enumerate}
  \item[(a)] The sequence (\ref{rem}) of remote projections converges in norm for each starting element $x_0\in H$ and for any sequence $\{t_n\}$. In particular,    random projections converge.
  \item[(b)] If, moreover, $\sum t_n^2=\infty$, then the limit point $w$ belongs to $ \bigcap_{\alpha\in \Omega} C_\alpha$, and   the rate of convergence is estimated by
      \begin{equation}\label{rate}
        |x_n-w|\le 2 |x_0-a| \prod_{k=0}^{n-1}\left(1-\frac{t_k^2r^2}{|x_0-a|^2}\right)^{1/2}.
      \end{equation}
\end{enumerate}
\end{remark}

The statement (b) clarifies a result from~\cite{GBR}. There the convergence to a  point in the intersection  was shown  under the condition $\sup_\alpha {\rm dist\,}(x_n,C_\alpha)\to 0$ as $n\to\infty$. Also, an exponential rate of convergence was established for remotest projections ($t_n\equiv 1$) with an estimate similar to ours.

\begin{proof} (a) We assume $a=0$ and use the notations  $y_n=x_n-x_{n+1}$, $\varepsilon_n=\pi/2-\angle 0x_{n+1}x_n$, and also several inequalities from the proof of Theorem \ref{theorem1}.

In view of (\ref{ext_normal}), we have $\langle y_n, z-x_{n+1}\rangle \le 0$ for any $z\in B(0,r)$. Consequently,
$$
|y_n||x_{n+1}|\sin \varepsilon_n =\langle  y_n,x_{n+1}\rangle\ge \sup _{z\in B(0,r)}\langle y_n,z\rangle=r|y_n|,
$$
so that
$$
\sin \varepsilon_n\ge \frac{r}{|x_{n+1}|}\ge \frac{r}{|x_0|}.
$$
This estimate together with (\ref{sin}) yields $\sum |y_n|<\infty$, meaning that $x_n$ converge in norm.

(b) To prove that the limit point $w$ belongs to $C=\bigcap_{\alpha\in \Omega} C_\alpha$ in case $\sum t_n^2=\infty$, one can use the same arguments as in part 6 of the proof of Theorem \ref{theorem1}.

Now we proceed to prove (\ref{rate}). Note that for any $n\in {\mathbb N}$ we have
\begin{equation}\label{est_via_dist}
  |x_n-w|\le 2\,{\rm dist\,} (x_n,C),
\end{equation}
otherwise
$$
|x_m-y|<\frac{|x_m-w|}{2}
$$
for some $y\in C$ and $m\in {\mathbb N}$, so that
$$
|x_n-y|\to |w-y|\ge |x_m-w|-|x_m-y|>|x_m-y|,
$$
which contradicts the fact that the sequence $\{|x_n-y|\}$ is decreasing.

Let $n\in {\mathbb N}$ and
$$
d_n=\sup_\alpha {\rm dist\,}(x_n,C_\alpha).
$$
The  ball $B(x_n,d_n)$ contains a point $p_\alpha\in C_\alpha$ for each $\alpha$. Since the point
$$
u_n=\frac{d_n}{d_n+r}a+\frac{r}{d_n+r}x_n
$$
belongs to ${\rm conv\,}\{p,B(a,r)\}$ for each $p\in B(x_n,d_n)$, we get $u_n\in C$, so that
$$
{\rm dist\,}(x_n,C)\le |x_n-u_n|=|x_n-a|\cdot \frac{d_n}{d_n+r}.
$$
Consequently,
$$
d_n\ge \frac{r\,{\rm dist\,}(x_n,C)}{|x_n-a|-{\rm dist\,}(x_n,C)}\ge \frac{r\,{\rm dist\,}(x_n,C)}{|x_0-a|}.
$$

Let $P_Cx_n=b$. Since $b\in C_{\alpha(n)}$, the angle $\angle x_nx_{n+1}b$ is not less than $ \pi/2$, so that
$$
{\rm dist\,}(x_{n+1},C)^2\le |x_{n+1}-b|^2\le |x_n-b|^2-|x_n-x_{n+1}|^2
$$
$$
\le {\rm dist\,}(x_n,C)^2-t_n^2d_n^2\le {\rm dist\,}(x_n,C)^2\left(1-\frac{t_n^2r^2}{|x_0-a|^2}\right).
$$
Hence,
$$
{\rm dist\,}(x_{n+1},C)\le {\rm dist\,}(x_0,C)\prod_{k=0}^n \left(1-\frac{t_k^2r^2}{|x_0-a|^2}\right)^{1/2},
$$
which together with (\ref{est_via_dist}) gives (\ref{rate}).

\end{proof}

Assume that unlike the assumption in Remark~\ref{ball} we deal with a family of slim sets: all $C_\alpha$ are hyperplanes $g_\alpha^\perp$. Then 
 remote projections implement the Weak Greedy Algorithm with respect to the dictionary $D=\{\pm g_\alpha: \alpha\in \Omega\}$ and there are  estimates of the   rate  of convergence for starting elements from $\overline{{\rm conv\,}} D$~\cite[Ch.\,2]{T}.
We wonder if any such estimates can be shown for a class of starting elements in the general setting of Theorem~\ref{theorem1}.

\section{Weak convergence}\label{weak}

Bregman~\cite{Bre} proved that for any family of general (non-symmetric)  closed convex sets with nonempty intersection the remotest projections (\ref{rem}) with $t_n\equiv 1$ always converge weakly. He assumed that $\max_{\alpha} {\rm dist\,}(x,C_\alpha)$ is attained for each $x\in H$. It is quite natural to generalize this result to remote projections by slightly changing his arguments.

\begin{theorem}\label{theorem2}
Assume $\{C_\alpha\}$ is a family of closed and convex sets in a Hilbert space  $H$ with a nonempty intersection $C= \bigcap_{\alpha\in \Omega} C_\alpha$.
Let the sequence $\{t_n\}$ in $[0,1]$ satisfy the following condition: there are $\delta>0$ and $K\in{\mathbb N}$ so that  for any $n\in {\mathbb N}$ at least one of the values $t_n,\dots,t_{n+K}$ is greater than $\delta$.
Then the sequence (\ref{rem}) of remote projections converges weakly to some point of $C$ for any starting element $x_0\in H$.
\end{theorem}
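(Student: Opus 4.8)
The plan is to adapt Bregman's proof of the remotest case $t_n\equiv 1$, feeding in the weakness sequence only where it is really needed. First I would record the usual Fej\'er monotonicity: for any $c\in C$ we have $c\in C_{\alpha(n)}$, so the variational inequality for the metric projection $x_{n+1}=P_{\alpha(n)}x_n$ gives $\langle x_n-x_{n+1},\,c-x_{n+1}\rangle\le 0$, whence
\[
|x_{n+1}-c|^2\le |x_n-c|^2-|x_n-x_{n+1}|^2\le |x_n-c|^2 .
\]
Thus $|x_n-c|$ is nonincreasing; in particular $\{x_n\}$ is bounded, $d_n:=\sup_\alpha{\rm dist\,}(x_n,C_\alpha)\le |x_n-c|\le|x_0-c|<\infty$, and telescoping the displayed estimate yields $\sum_n|x_n-x_{n+1}|^2<\infty$, so $|x_n-x_{n+1}|\to 0$. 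By the classical lemma on Fej\'er-monotone sequences---if a sequence is Fej\'er monotone with respect to a nonempty closed convex set and all of its weak cluster points lie in that set, then it converges weakly to a point of the set (see e.g.\ \cite{BC})---the theorem reduces to showing that every weak cluster point of $\{x_n\}$ belongs to $C$.

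The crux is to prove that in fact $d_n\to 0$, and this is the only place the hypothesis on $\{t_n\}$ is used. Given $n$, choose $m_n\in\{n,n+1,\dots,n+K\}$ with $t_{m_n}>\delta$. Then
\[
|x_{m_n}-x_{m_n+1}|={\rm dist\,}(x_{m_n},C_{\alpha(m_n)})\ge t_{m_n}d_{m_n}>\delta\,d_{m_n},
\]
so $d_{m_n}<\delta^{-1}|x_{m_n}-x_{m_n+1}|\to 0$ as $n\to\infty$. Also $|x_n-x_{m_n}|\le\sum_{k=n}^{m_n-1}|x_k-x_{k+1}|\le\sum_{k=n}^{n+K-1}|x_k-x_{k+1}|\to 0$ as $n\to\infty$, because the increments vanish. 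For every $\alpha$,
\[
{\rm dist\,}(x_n,C_\alpha)\le |x_n-x_{m_n}|+{\rm dist\,}(x_{m_n},C_\alpha)\le |x_n-x_{m_n}|+d_{m_n},
\]
and taking the supremum over $\alpha$ gives $d_n\le |x_n-x_{m_n}|+d_{m_n}\to 0$.

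Now let $w$ be any weak cluster point, say $x_{n_k}\rightharpoonup w$. Fix $\alpha\in\Omega$ and set $p_k=P_{C_\alpha}x_{n_k}$; then $|x_{n_k}-p_k|={\rm dist\,}(x_{n_k},C_\alpha)\le d_{n_k}\to 0$, so $p_k\rightharpoonup w$ as well, and since $C_\alpha$ is closed and convex---hence weakly closed---we get $w\in C_\alpha$. As $\alpha$ was arbitrary, $w\in C$, and the reduction from the first paragraph completes the proof.

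The only genuinely new ingredient is the step $d_n\to 0$, and the obstacle there is essentially bookkeeping: one must bound $d_n$ \emph{uniformly in $n$}, not merely along some subsequence, and the hypothesis that every block of $K+1$ consecutive indices contains one with $t_m>\delta$ is exactly what makes the triangle-inequality estimate close up. Everything else---Fej\'er monotonicity, square-summability of the increments, and the Fej\'er-monotone weak-convergence lemma---is the standard machinery already present in Bregman's treatment of the remotest case.
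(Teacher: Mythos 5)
Your proposal is correct and follows essentially the same route as the paper: the same block/triangle-inequality estimate yielding $\sup_\alpha {\rm dist\,}(x_n,C_\alpha)\to 0$ (from $\sum_n|x_n-x_{n+1}|^2<\infty$ plus one index with $t_m>\delta$ in each window of length $K+1$), and then identification of all weak cluster points with points of $C$. The only cosmetic difference is that you invoke the standard Fej\'er-monotonicity lemma from \cite{BC}, whereas the paper proves uniqueness of the weak limit in-line via the usual $|x_n-v|^2-|x_n-w|^2$ argument.
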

\begin{proof}
Take any $n\in {\mathbb N}$ and $k\in \{n,\dots,n+K\}$ so that $t_k>\delta$. We use the notation $y_\nu=x_\nu-x_{\nu+1}$.  For any $\alpha\in \Omega$, we have
$$
{\rm dist\,}(x_n,C_\alpha)\le |x_n-x_k|+{\rm dist\,}(x_k,C_\alpha)\le \sum_{i=n}^{k-1}|y_i|+\frac{{\rm dist\,}(x_k,C_{\alpha(k)})}{t_k}
$$
$$
\le \sqrt{K}\left(\sum_{i=n}^{n+K-1}|y_i|^2\right)^{1/2}+\frac{|y_k|}{\delta}\to 0 \qquad (n\to \infty),
$$
since $\sum |y_\nu|^2<\infty$ in view of (\ref{y_n}). Consequently, each partial weak limit $w$ of the  $x_n$'s belongs to $C$. In its turn, this implies 
that the whole sequence $|x_n-w|$ is decreasing, since each $P_{\alpha}$ is a 1-Lipschitz retraction onto $C_{\alpha}$.

A partial weak limit does exist, so we have to prove its uniqueness. Let $v$ and $w$ be two partial weak limits, so that $x_{n_i}$ converge weakly to $v$ and $x_{m_j}$ converge weakly to $w$. The numbers
$$
d_n:=|x_n-v|^2-|x_n-w|^2=2\langle v-x_n,v-w\rangle -|v-w|^2=2\langle x_n-w,w-v\rangle +|w-v|^2
$$
tend to a single limit, as we have just mentioned. On the other hand, $d_{n_i}\to -|v-w|^2$ and $d_{m_j}\to |w-v|^2$. Hence, $v=w$.
\end{proof}

The following result was established by Dye and Reich in \cite{DR}. 

\begin{corollary}\label{walmostperiodic}
Assume  $C_1, \dots, C_K$ are finitely many closed and  convex   subsets of $H$ with a nonempty intersection $C=\bigcap_1^K C_j$. Assume  $\{\alpha(n)\}$ is a quasi-periodic sequence of the indices $1,\dots, K$.
Then the sequence $x_{n+1}=P_{\alpha(n)}x_n$ of nearest point projections converges weakly to a point in $C$ for any starting point $x_0\in H$.
\end{corollary}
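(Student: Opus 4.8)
The plan is to obtain this as a corollary of Theorem~\ref{theorem2}, in exact parallel with the way Corollary~\ref{almostperiodic} is obtained from Theorem~\ref{theorem1}. The first step is to realize the given cyclic-type sequence of projections as a sequence of \emph{remote} projections (\ref{rem}) for a suitable choice of weakness parameters $t_n\in[0,1]$, and then to check that these $t_n$ satisfy the hypothesis of Theorem~\ref{theorem2}.

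Concretely, set $b_n=\max_{k}{\rm dist\,}(x_n,C_k)$. If $b_n=0$ for some $n$, then $x_n\in C$ and, since $P_{\alpha(m)}$ fixes $x_n$ for every $m\ge n$, the sequence is eventually constant and converges to a point of $C$; so I may assume $b_n>0$ for all $n$ and put $t_n=|x_{n+1}-x_n|/b_n\in[0,1]$. Because $x_{n+1}=P_{\alpha(n)}x_n$, we have ${\rm dist\,}(x_n,C_{\alpha(n)})=|x_{n+1}-x_n|=t_n b_n=t_n\sup_{k}{\rm dist\,}(x_n,C_k)$, so $\{x_n\}$ is indeed a sequence of remote projections onto $\{C_1,\dots,C_K\}$ with weakness parameters $\{t_n\}$.

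The heart of the matter is to show that $\{t_n\}$ meets the condition of Theorem~\ref{theorem2}, i.e.\ that some fixed $\delta>0$ is exceeded by at least one $t_n$ in every window of bounded length. This is precisely the window estimate already carried out inside the proof of Corollary~\ref{almostperiodic}: if $M$ is the quasi-periodicity constant of $\{\alpha(n)\}$, then every interval of $M$ consecutive indices contains an $n$ with $t_n\ge 1/(6M)$. I would reuse that argument verbatim, after pointing out that it rests only on the triangle inequality together with quasi-periodicity and \emph{does not use any symmetry of the sets} --- which is exactly why it survives the passage from Corollary~\ref{almostperiodic} to the present, symmetry-free statement. Taking $\delta=1/(7M)$ (so that $t_n\ge 1/(6M)>\delta$) and $K=M$, Theorem~\ref{theorem2} then yields that $\{x_n\}$ converges weakly to a point of $C$.

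The only place demanding any care is that last window estimate: one must confirm that the derivation inside the proof of Corollary~\ref{almostperiodic} is genuinely symmetry-free, and pin down the indexing (the length of the window, and strict versus non-strict inequality, so that the hypothesis of Theorem~\ref{theorem2} is met with its strict ``greater than $\delta$''). Everything else --- the degenerate case $b_n=0$, the identification with (\ref{rem}), and the invocation of Theorem~\ref{theorem2} --- is routine bookkeeping.
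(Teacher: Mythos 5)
Your proposal is correct and follows essentially the same route as the paper: the paper also defines $t_n=|x_{n+1}-x_n|/\max_k{\rm dist\,}(x_n,C_k)$, reuses the window estimate $t_n\ge 1/(6M)$ from the proof of Corollary~\ref{almostperiodic} (which, as you note, uses only the triangle inequality and quasi-periodicity, not symmetry), and then invokes Theorem~\ref{theorem2}. Your extra care with the degenerate case $b_n=0$ and with the strict inequality in Theorem~\ref{theorem2} is fine and only tidies up details the paper leaves implicit.
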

\begin{proof}
The sequence  $\{\alpha(n)\}$ is  quasi-periodic, which means that there is a constant $M\in {\mathbb N}$ so that for every interval $I$ of length $M$ the set $\{\alpha(n):\, n\in I\}$ contains all of the indices $1,\dots, K$.   As in the proof of Corollary~\ref{almostperiodic} we choose  $\beta(n)\in \{1,\dots,\,K\}$ and define $b_n>0$ and  weakness parameters $t_n\in [0,1]$ as follows:
\begin{equation}\notag
\begin{split}
{\rm dist\,}(x_n,C_{\beta(n)})&=\max_{k} {\rm dist\,}(x_n,C_k)=b_n, \\
t_n&=|x_{n+1}-x_n|/b_n.
\end{split}
\end{equation}
Then  for each interval $I$ of length $M$ there is an $n\in I$ so that $t_n\geq  1/(6M)$, as shown in the proof of Corollary~\ref{almostperiodic}. According to Theorem~\ref{theorem2} the sequence $\{x_n\}$ converges weakly to a point of $C$.
\end{proof}

We do not know if the  condition on the sequence $\{t_n\}$ in Theorem~\ref{theorem2} is necessary for the weak convergence of remote projections. It  is much stronger than  the condition (\ref{teml_cond}) implying  the norm convergence of remote projections in the uniformly quasi-symmetric case. We do not know of an equivalent condition for the weak convergence   in the uniformly quasi-symmetric  case either.
We give, however,  criteria for remote projections to have a partial weak limit in the intersection of the sets considered.

\begin{theorem}\label{theorem3} For a sequence $\{t_n\}_{n=0}^\infty \subset [0,1]$, the following statements are equivalent:
\begin{enumerate}
\item[(i)] $\sum t_n^2=\infty$;
\item[(ii)] the sequence $\{x_n\}$ of remote projections (\ref{rem}) with parameters $t_n$ has a partial weak limit in $\bigcap_{\alpha\in \Omega} C_\alpha$  for any starting element $x_0\in H$ and any family $\{C_\alpha\}_{\alpha\in \Omega}$ of closed and convex sets in $H$ with nonempty intersection;
\item[(iii)] the residuals $\{x_n\}$ in the Weak Greedy Algorithm (\ref{greal}) with parameters $t_n$ have a partial weak limit 0 for any starting element $x_0\in H$ and any dictionary $D\subset S(H)$.
\end{enumerate}
\end{theorem}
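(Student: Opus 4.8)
The plan is to prove the cyclic chain (i)$\,\Rightarrow\,$(ii)$\,\Rightarrow\,$(iii)$\,\Rightarrow\,$(i), the last of these in contrapositive form. Only the last implication needs a construction; the first two are soft.

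For (i)$\,\Rightarrow\,$(ii) I would repeat the first and last steps of the proof of Theorem~\ref{theorem1}, but extract one good subsequence instead of proving the sequence Cauchy. After a translation take $0\in C$. The obtuse-angle property of metric projections gives $|x_{n+1}|^2\le|x_n|^2-|y_n|^2$ with $y_n=x_n-x_{n+1}$, so $|x_n|$ decreases and $\sum_n|y_n|^2<\infty$, exactly as in (\ref{y_n}). Since $\sum_n t_n^2=\infty$ while $\sum_n|y_n|^2<\infty$, there is a subsequence $\{n_i\}$ with $t_{n_i}>0$ along which $|y_{n_i}|/t_{n_i}\to0$ — otherwise $|y_n|\ge\varepsilon t_n$ for all large $n$ and some $\varepsilon>0$, forcing $\sum|y_n|^2\ge\varepsilon^2\sum t_n^2=\infty$. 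By the defining inequality of remote projections, $\sup_\alpha{\rm dist\,}(x_{n_i},C_\alpha)\le|y_{n_i}|/t_{n_i}\to0$. Passing to a further, weakly convergent subsequence $x_{n_{i_j}}\rightharpoonup w$ (the $x_n$ are bounded), and using that each convex continuous map $z\mapsto{\rm dist\,}(z,C_\alpha)$ is weakly lower semicontinuous, I get ${\rm dist\,}(w,C_\alpha)\le\liminf_j{\rm dist\,}(x_{n_{i_j}},C_\alpha)=0$ for every $\alpha$, hence $w\in C$.

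The implication (ii)$\,\Rightarrow\,$(iii) is immediate from Remark~\ref{WGA}: given a dictionary $D\subset S(H)$, apply (ii) to the family of hyperplanes $\{g^\perp:g\in D\}$, whose intersection is $(\overline{{\rm span\,}}D)^\perp=\{0\}$; since ${\rm dist\,}(x,g^\perp)=|\langle x,g\rangle|$, the WGA with parameters $\{t_n\}$ is exactly the sequence of remote projections onto this family, the attainment conventions coincide, and a partial weak limit of the residuals in this intersection is a partial weak limit equal to $0$. For (iii)$\,\Rightarrow\,$(i) I argue by contraposition: assuming $\sum_n t_n^2<\infty$, I build a dictionary $D$ and a starting point $x_0$ so that some realization of the WGA keeps a fixed positive coordinate, whence $0$ is not a partial weak limit of the residuals. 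Note $t_n\to0$, so only finitely many $t_n$ equal $1$ and $\kappa:=\prod_{\{k:\,t_k<1\}}(1-t_k^2)>0$. Fix an orthonormal sequence $e_0,e_1,\dots$ and a small $\psi>0$ with $\tan^2\psi<\kappa$; put $W=e_0^\perp$, $x_0=\sin\psi\,e_0+\cos\psi\,e_1$, and $D=\{\pm e_0\}\cup S(W)$, which is a dictionary ($\overline{{\rm span\,}}D=H$) for which $\max_{g\in D}|\langle x,g\rangle|$ is attained for every $x$. Assume first $t_n<1$ for all $n$. Writing $w_n=P_Wx_n$, at the $n$-th step choose $g_n\in S(W)$ with $\langle w_n,g_n\rangle=t_n|w_n|$; such a unit vector of $W$ exists because $\dim W=\infty$ and $t_n\le1$. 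Since every $g_n\perp e_0$, induction gives $\langle x_n,e_0\rangle=\sin\psi$ for all $n$ and $|w_n|^2=\cos^2\psi\prod_{k<n}(1-t_k^2)\ge\kappa\cos^2\psi>\sin^2\psi$; hence $\sup_{g\in D}|\langle x_n,g\rangle|=\max(\sin\psi,|w_n|)=|w_n|$, and the weakness inequality $|\langle x_n,g_n\rangle|=t_n|w_n|\ge t_n\sup_{g\in D}|\langle x_n,g\rangle|$ holds at every step. So this is a legitimate run of the WGA, and since $\langle x_n,e_0\rangle=\sin\psi>0$ for all $n$, every partial weak limit $w$ satisfies $\langle w,e_0\rangle=\sin\psi\neq0$; thus $0$ is not a partial weak limit, contradicting (iii). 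This is precisely where $\sum t_n^2<\infty$ is used: it keeps $|w_n|$ from decaying below $\sin\psi$.

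When some $t_n$ equal $1$ (finitely many indices, the last being, say, $N$), the sphere $S(W)$ has to be thinned near the finitely many directions $w_n/|w_n|$, $n\le N$: one removes small spherical caps around those points and their antipodes, so that at each such step the forced maximizing choice realizes only the reduced value $\sin\psi$ and therefore does not annihilate $w_n$, while the surviving set still spans $W$ and all required maxima remain attained; one then shrinks $\psi$ further, depending on $N$, to keep $|w_n|$ bounded away from $0$. Making this modification rigorous — essentially constructing the example announced in the introduction — is the one delicate point of the argument, and hence the main obstacle; the rest, including (i)$\,\Rightarrow\,$(ii) and (ii)$\,\Rightarrow\,$(iii), is routine.
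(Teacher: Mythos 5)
Your treatment of (i)$\Rightarrow$(ii) and (ii)$\Rightarrow$(iii) coincides with the paper's own proof: from $\sum|y_n|^2<\infty$ and $\sum t_n^2=\infty$ extract a subsequence with $|y_n|/t_n\to 0$, conclude $\sup_\alpha{\rm dist\,}(x_n,C_\alpha)\to 0$ along it, and pass to a weak cluster point, which lies in $C$ by weak lower semicontinuity of the distance functions; the reduction of the WGA to remote projections onto the hyperplanes $g^\perp$ is exactly Remark~\ref{WGA}. For (iii)$\Rightarrow$(i) your construction is genuinely different from the paper's: you fix one direction $e_0$, take the dictionary $\{\pm e_0\}\cup S(W)$ with $W=e_0^\perp$, and preserve the invariant $\langle x_n,e_0\rangle=\sin\psi$, whereas the paper uses the countable family of lines $L(a),L(b),L(s_n)$ with $\{s_n\}$ a dense walk in a spherical cap $V(s)$, with angular steps $\tau_n\ge t_n$, $\sum_{n\ge m}\tau_n^2<\tfrac14$, so that the orbit stays in the cone over the cap and every weak cluster point $w$ satisfies $\langle w,s\rangle\ge\frac{\sqrt3}{2}r>0$. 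In the case $t_n<1$ for all $n$ your argument is correct and, I would say, simpler than the paper's.

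The gap is precisely the case you flag, when some $t_n=1$, and your sketched repair does not work as stated. With the full sphere $S(W)$ a pure step is forced to take $g_n=\pm w_n/|w_n|$, annihilating $w_n$, after which any later $t_m>0$ forces $\pm e_0$ and the residual becomes $0$; so thinning is indeed necessary. But thinning ``so that the forced maximizing choice realizes only the reduced value $\sin\psi$'' means the maximum over the thinned dictionary is attained at $\pm e_0$, so the pure step is forced to subtract the $e_0$-component — this destroys the invariant $\langle x_n,e_0\rangle=\sin\psi$ on which your whole non-convergence conclusion rests, and the fallback ``keep $|w_n|$ bounded away from $0$'' proves nothing: norms bounded below are perfectly compatible with $0$ being a partial weak limit (that is the whole weak-versus-norm issue here), unless you additionally control a fixed coordinate of $w_n$, which you do not. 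The correct repair goes the other way: remove caps small enough that the maximum over the thinned sphere remains strictly above $\sin\psi$ (so the forced pick stays in $S(W)$ and the $e_0$-coordinate survives) but is attained only at a definite positive angle to $w_n$, so each of the finitely many pure steps costs a fixed factor of $|w_n|$, compensated by shrinking $\psi$; one must also arrange the removed caps in advance although they depend on the orbit (a finite bootstrapping, since only finitely many $t_n$ equal $1$ and all earlier choices are yours). The paper avoids this case distinction altogether: its first $m$ steps are genuinely remotest projections, alternating between the two antipodal boundary lines $L(a),L(b)$ of the cap, hence admissible for arbitrary parameters $t_0,\dots,t_{m-1}\in[0,1]$ including $1$, and only afterwards does the weakness hypothesis $\tau_n\ge t_n$ enter. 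So your proposal proves the theorem for weakness sequences with $t_n<1$ throughout, but as written it does not yet cover the full statement.
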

\begin{proof}
(i) $\Rightarrow$ (ii). Let $\sum t_n^2=\infty$, $0\in C=\bigcap_{\alpha\in \Omega} C_\alpha$ and $\{x_n\}$ be the sequence (\ref{rem}). We denote $y_n=x_n-x_{n+1}$ again. Since $\sum |y_n|^2<\infty$ in view of (\ref{y_n}), we can choose a subsequence $\Lambda\subset {\mathbb N}$ with the property
$|y_n|/t_n\to 0$, $n\in \Lambda$, $n\to \infty$. Taking any $\alpha\in \Omega$ and $n\in \Lambda$, we get
$$
{\rm dist\,}(x_n,C_\alpha)\le \frac{{\rm dist\,}(x_n,C_{\alpha(n)})}{t_n}=\frac{|y_n|}{t_n}\to 0, \qquad n\to \infty.
$$
Consequently, any partial weak limit of $\{x_n\}_{n\in \Lambda}$ belongs to $C$. In fact, this partial weak limit in $C$ is unique, as we have seen in the proof of Theorem \ref{theorem2}. However, there may be other partial weak limits outside of $C$, as  Example~\ref{weak_div} below shows.

(ii) $\Rightarrow$ (iii). This is obvious, since the WGA is a particular case of remote projections onto a family of hyperplanes having unique common point 0.

(iii) $\Rightarrow$ (i). Let  $\sum t_n^2<\infty$. In what follows we  construct a countable family $\{C_n\}$ of one-dimensional subspaces  and a sequence (\ref{rem}) of remote projections onto this family with parameters $t_n$, which does not converge weakly and does not have 0 as a partial weak limit.  Remark~\ref{WGA} then supplies an example of the WGA  with parameters $t_n$ whose residuals do not have 0 as a partial weak limit.

We choose a sequence $\{\tau_n\}$ with the properties $\tau_n\ge t_n$ for all $n$, $\sum \tau_n^2<\infty$, $\sum \tau_n=\infty$, and fix $m$ so that
\begin{equation}\label{m}
\sum_{n=m}^\infty \tau_n^2< \frac{1}{4}.
\end{equation}

We fix a point  $s$  on the unit sphere of $H$ and  take  the spherical cap
$$
V(s)=\left\{v\in S(H): \langle v,s\rangle\ge \frac{\sqrt3}{2}\right\}.
$$
We choose two {\em opposite} points $a$ and  $b$ on the boundary of  the cap:   $\langle a,s\rangle=\langle b,s\rangle= \sqrt 3/2$. We also choose  a sequence $\{s_n\}_{n=m}^\infty\subset V(s)$ so that
$s_m=a$ and $\langle s_n,s_{n+1}\rangle=\sqrt{1-\tau_n^2}$ for all $n\ge m$. This sequence can be constructed inductively: the choice of each next $s_{n+1}\in V(s)$ is possible, since $\sqrt{1-\tau_n^2}\ge \sqrt3/2$  by (\ref{m}). Moreover, we have
$$
|s_n-s_{n+1}|=\sqrt{2-2\sqrt{1-\tau_n^2}}\ge \tau_n,
$$
so that $\{s_n\}_{n=m}^\infty$ may be made dense in $V(s)$,  since $\sum \tau_n=\infty$.

Denoting by $L(v):={\rm span\,} \{v\}$ the line spanned by a  vector $v\in S(H)$, we consider the family of lines
$$
L(a), L(b), L(s_n), \qquad n=m,m+1,\dots,
$$
and the following sequence of remote projections onto this family of lines with starting element $x_0=s$. The projections $x_1,\dots,x_m$ alternately lie on the lines $L(a)$ and $L(b)$ so that they are remotest and satisfy the inequalities needed for any given parameters $t_0,\dots,t_{m-1}$. We choose the first projection $x_1$ lying either on $L(a)$ or $L(b)$ so that $x_m\in L(a)$, depending on the parity of $m$. As for $n\ge m$, we set
$x_{n+1}$ to be the projection of $x_n$ onto $L(s_{n+1})$:
\begin{equation*}
\begin{split}
    {\rm dist\,}(x_n,L(s_{n+1}))=& |x_n|\sin \angle (s_n0s_{n+1})\\
    =&|x_n| \tau_n\ge |x_n| t_n \ge t_n \sup_{v\in V(s)}{\rm dist\,}(x_n,L(v))
\end{split}
\end{equation*}

Clearly, the sequence $\{x_n\}$ is contained in the cone $\{ \lambda v:\, v\in V(s),\  \lambda>0\} $.
For any $n> m$, we have
$$
|x_n|^2=|x_m|^2-\sum_{k=m}^{n-1}|x_k|^2\tau_k^2\ge |x_m|^2\left(1-\sum_{k=m}^\infty \tau_k^2\right)\ge \frac{3}{4}|x_m|^2.
$$
This means that $|x_n|\to r>0$, and the set of all partial weak limits of the sequence $\{x_n\}$ is the closed convex hull of the cap $rV(s)$.

\end{proof}

The following Example shows that the conditions on the sequence $\{t_n\}$ in Theorem~\ref{theorem2} cannot be replaced by $\liminf_{n\to \infty}t_n>0$ and that in Theorem~\ref{theorem3} one cannot claim the uniqueness of the weak limit.

\begin{example}\label{weak_div}
Let $H$ be an infinite dimensional Hilbert space. 
Then 
there exists a countable family  of closed convex sets in $H$ with non-empty intersection    and a  sequence (\ref{rem}) of remote projections on this family  which does not converge weakly and its weakness parameters satisfy $\liminf_{n\to \infty}t_n>0$.
\end{example}
\begin{proof}
1. We use the following local construction.
\begin{lemmaX}\label{lemmaA}~\cite[Section 2.2]{K}
Let $a,b,c\in H$ be such that $|a|=|b|\neq 0$ and  $0\neq c\in \{a, b\}^{\perp}$. For every $\varepsilon>0$ there exists a convex closed cone $C=C(a,b,c,\varepsilon)\subset {\rm span\,}\{a,b,c\}$ with vertex 0 so that alternating projections between $C$ and the plane $D= {\rm span\,}\{a,b\}$ move the point $a$ close to the point $b$:
\begin{equation}\label{alter_hund}
\left|(P_D P_C)^ma-b\right|<\varepsilon
\end{equation}
for some $m=m(a,b,c,\varepsilon)$.
\end{lemmaX}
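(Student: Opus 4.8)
The plan is to reduce the statement to a two–dimensional, essentially planar mechanism and to realize $C$ as built from many shallow ``wedges'' arranged along the circular arc from $a$ to $b$. Since $C$ is a cone with vertex $0$ and $D$ is a subspace, both $P_C$ and $P_D$ are positively homogeneous, so $P_DP_C$ commutes with scaling; hence we may assume $|a|=|b|=1$. Everything takes place in the three–dimensional space ${\rm span\,}\{a,b,c\}$, in which we fix the orthonormal basis $e_1,e_2$ of $D$ together with $e_3=c/|c|$, and write points of $D$ in polar form $r(\cos\theta,\sin\theta)$. Writing $a=(\cos\alpha,\sin\alpha,0)$ and $b=(\cos\beta,\sin\beta,0)$, we may assume (reflecting $e_2\mapsto -e_2$ if necessary, and discarding the trivial case $a=b$) that the directed arc from $\alpha$ to $\beta$ has length $\ell\in(0,\pi]$.

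The engine of the proof is a single–wedge computation. For a unit vector $w\in D$ and a small tilt angle $\phi$, let $H=\{x:\langle x,n\rangle\le 0\}$ be the half–space with unit normal $n=\cos\phi\,w+\sin\phi\,e_3$. A direct computation shows that for $p\in D$ with $\langle p,w\rangle>0$ one has $P_DP_Hp=p-\cos^2\phi\,\langle p,w\rangle\,w$. Choosing $w$ at polar angle $\theta-\tfrac\pi2+\eta$ relative to a point $p=r(\cos\theta,\sin\theta)$ (with $\eta>0$ small), so that the in–plane push $-\cos^2\phi\,\langle p,w\rangle w$ points in the forward tangential direction, I would show, to first order in $\eta$, that this step advances the polar angle by $\delta\theta\approx \cos^2\phi\,\sin\eta\cos\eta$ while the radius changes only by the factor $1-\cos^2\phi\,\sin^2\eta=1-O(\eta^2)$. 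Thus one shallow wedge rotates an in–plane point forward by a controllable small angle at the cost of a radial loss that is one order smaller.

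With this atomic move in hand, I would build $C$ from $N\approx \ell/\delta\theta$ such wedges placed along the arc: choosing intended polar angles $\theta_0=\alpha<\theta_1<\dots<\theta_N=\beta$ with equal small increments, I assign to each a normal $n_k=\cos\phi\,w_k+\sin\phi\,e_3$ with $w_k$ at angle $\theta_k-\tfrac\pi2+\eta$, and take $C$ to be the closed convex cone in ${\rm span\,}\{a,b,c\}$ whose boundary near the arc is formed by exactly these tilted facets, arranged so that the trajectory meets them one at a time. One then runs the alternating projection from $x_0=a$ and tracks it against the intended angles $\theta_k$. \textbf{The main obstacle is precisely this tracking, i.e.\ the sequential activation of a single facet}: one must verify that at each stage the metric projection of $x_j$ onto the whole cone coincides with its projection onto exactly one wedge $H_{k(j)}$ (so that the clean formula above applies), and that after one macro step $P_DP_C$ the active index advances by one. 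This is where the freedom in the small tilt $\phi$ and in the fine angular spacing is spent: they are chosen so that the foot of $x_j$ on $H_{k(j)}$ still satisfies all the remaining cone constraints and so that the forward drift carries $x_j$ into the domain of the next facet but no further. This delicate geometric core is carried out in \cite[Section 2.2]{K}.

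Once sequential activation is established, the conclusion is a summation. After the $N$ steps the polar angle has advanced by exactly $\ell$, so $x_N$ points in the direction of $b$, while the total radial loss is a product of $N=O(\ell/\eta)$ factors each equal to $1-O(\eta^2)$, hence bounded below by $1-O(\ell\eta)\to 1$ as $\eta\to0$. Thus $|x_N|\to 1=|b|$ and $x_N\to b$ as the wedges are refined; choosing $\eta$ small enough and $m=N$ gives $|(P_DP_C)^ma-b|<\varepsilon$, which is \eqref{alter_hund}. Since the whole construction lives in ${\rm span\,}\{a,b,c\}$ and the trivial and antipodal cases ($a=b$, $a=-b$) are covered respectively by $m=0$ and by sweeping a full half–circle, this proves the lemma.
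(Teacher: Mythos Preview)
The paper does not give its own proof of Lemma~A; it is quoted as a black box from \cite[Section~2.2]{K}. So there is nothing in the paper to compare your argument against beyond the citation itself.

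Your outline is in the spirit of the construction in \cite{K}: assemble $C$ from many nearly-flat tilted pieces so that one application of $P_DP_C$ advances the polar angle of a point in $D$ by $O(\eta)$ while the radius shrinks only by a factor $1-O(\eta^2)$; with $N\approx \ell/\eta$ pieces the total radial loss is $O(\eta)$ and $(P_DP_C)^N a\to b$ as $\eta\to 0$. Your single-step formula $P_DP_Hp=p-\cos^2\phi\,\langle p,w\rangle\,w$ and the first-order expansions for $\delta\theta$ and $\delta r$ are correct, and you rightly identify the sequential-activation check as the real work, deferring it to \cite{K}. One stylistic difference: you describe $C$ via intersecting half-spaces (facets), whereas \cite{K} builds it as the convex hull of rays (``spokes''); these are dual pictures of the same polyhedral cone, but note that $\bigcap_k H_k$ as you wrote it is unbounded in the $e_3$-direction, so you would still need to say precisely which cone you mean before the tracking argument can even be stated.

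Your treatment of the antipodal case is actually wrong, not just sketchy. If $a=-b$ then $D=\mathrm{span}\{a,b\}$ is a \emph{line} and $\mathrm{span}\{a,b,c\}$ is two-dimensional, so there is no room to ``sweep a half-circle''. In fact, for any closed convex cone $C$ with vertex $0$ one has $\langle P_Cx,x\rangle=|P_Cx|^2\ge 0$, hence $P_DP_Ca$ is a nonnegative multiple of $a$ and the iterates can never approach $-a$. So the lemma, read literally, fails for $a=-b$; the phrase ``the plane $D$'' in the statement tacitly assumes $a$ and $b$ are linearly independent, and in the paper it is only applied to orthogonal pairs $(e_k,u)$ and $(u,e_{k+1})$. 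You should drop the claim that the antipodal case is covered.
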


The cone from Lemma~\ref{lemmaA} also satisfies  
\begin{equation}\label{dist(a,C)}
  {\rm dist\,}(a,C)\le \sqrt{2|a|\varepsilon},
\end{equation}
since otherwise
$$
\left|(P_D P_C)^ma\right|\le \left|P_Ca\right|=\sqrt{|a|^2-{\rm dist\,}(a,C)^2}<|a|-\varepsilon=|b|-\varepsilon,
$$
which contradicts (\ref{alter_hund}). Similarly,
$$
\left|P_C(P_D P_C)^{m-1}a-(P_D P_C)^ma\right|={\rm dist\,}(P_C(P_D P_C)^{m-1}a,D)\le \sqrt{2|a|\varepsilon},
$$
and hence
\begin{equation}\label{dist(b,C)}
\begin{split}
{\rm dist\,}(b,C)&\le \left|b-P_C(P_D P_C)^{m-1}a\right|\le \left|b-(P_D P_C)^{m}a\right| \\
&+\left|P_C(P_D P_C)^{m-1}a-(P_D P_C)^ma\right|\le \varepsilon+\sqrt{2|a|\varepsilon}.
\end{split}
\end{equation}

2. We may assume that $H$ is separable and fix an orthonormal basis 
 $\{u,\, v,\, e_k:\, k\in {\mathbb N}\}$  of $H$. We choose a decreasing   sequence $\varepsilon_n\searrow 0$  so that
$$
\sum_{k=1}^\infty \sqrt{\varepsilon_k}<\frac 1{10}.
$$
We set
\begin{equation}\notag
\begin{split}
D&=v^\perp=\overline{{\rm span\,}}\{u,e_k: k\in {\mathbb N}\},\\
C_1&=C(e_1,u,v,\varepsilon_1)+\overline{{\rm span\,}}\{e_n: n\in {\mathbb N}, n\not=1\},\\
m_1&=m(e_1,u,v,\varepsilon_1),\\
C_2&=C(u,e_2,v,\varepsilon_2)+\overline{{\rm span\,}}\{e_n: n\in {\mathbb N}, n\not=2\},\\
m_2&=m(u,e_2,v,\varepsilon_2),\\
&\dots\\
C_{2k-1}&=C(e_k,u,v,\varepsilon_{2k-1})+\overline{{\rm span\,}}\{e_n: n\in {\mathbb N}, n\not=k\},\\
m_{2k-1}&=m(e_k,u,v,\varepsilon_{2k-1}),\\
C_{2k}&=C(u,e_{k+1},v,\varepsilon_{2k})+\overline{{\rm span\,}}\{e_n: n\in {\mathbb N}, n\not=k+1\},\\
m_{2k}&=m(u,e_{k+1},v,\varepsilon_{2k}),\\
&\dots.
\end{split}
\end{equation}
Clearly,  (\ref{alter_hund}) works  for the extended cones $C_{2k-1}$ and  $C_{2k}$ as well:
\begin{equation}\label{alter_hund_trick_odd}
\left|(P_D P_{C_{2k-1}})^{m_{2k-1}}e_k-u\right|<\varepsilon_{2k-1},
\end{equation}
\begin{equation}\label{alter_hund_trick_even}
\left|(P_D P_{C_{2k}})^{m_{2k}}u-e_{k+1}\right|<\varepsilon_{2k}.
\end{equation}

3. We have $e_k\in D$ for all $k$ and  $e_k\in C_n$ for $n\not= 2k-2,2k-1$ by construction, hence also
\begin{equation}\notag
\begin{split}
{\rm dist\,}(e_k,C_{2k-1})&<\sqrt{2\varepsilon_{2k-1}},\\
{\rm dist\,}(e_k,C_{2k-2})&<\sqrt{2\varepsilon_{2k-2}}+\varepsilon_{2k-2}
\end{split}
\end{equation}
by (\ref{dist(a,C)}) and  (\ref{dist(b,C)}). This implies  for  $P$ being a  projection onto $C_n$ or $D$ that
\begin{equation}\label{kissing_point}
|e_k-Pe_k|< 3\sqrt{\varepsilon_{2k-2}}, \qquad k=2,3,\dots.
\end{equation}

4. Now we define the required sequence of remote projections on the family $\{D,\, C_n:\, n\in {\mathbb N}\}$.

We start with $x_0=e_1$ and make $m_1$ alternating projections on $C_1$ and $D$:
$$
y_1=(P_DP_{C_1})^{m_1}e_1.
$$
Then we make $m_2$ alternating projections on $C_2$ and $D$:
$$
y_2=(P_DP_{C_2})^{m_2}y_1.
$$
Then we make the projection $P_2$ on one of the sets $C_n$ so that
$$
|y_2-P_2y_2|\ge \frac{1}{2}\sup_n {\rm dist\,}(y_2,C_n),
$$
and we set
$$
z_2=P_2y_2.
$$

We proceed by induction: having defined $y_1,y_2,\dots,y_{2k-2}$ and\\ $z_2,z_4,\dots,z_{2k-2}$, we make $m_{2k-1}$ alternating projections on $C_{2k-1}$ and $D$:
$$
y_{2k-1}=(P_DP_{C_{2k-1}})^{m_{2k-1}}z_{2k-2},
$$
then $m_{2k}$ alternating projections on $C_{2k}$ and $D$:
$$
y_{2k}=(P_DP_{C_{2k}})^{m_{2k}}y_{2k-1},
$$
and then one projection $P_{2k}$ on one of the sets $C_n$ so that
\begin{equation}\label{kissing_projection}
  |y_{2k}-P_{2k}y_{2k}|\ge \frac{1}{2}\sup_n {\rm dist\,}(y_{2k},C_n),
\end{equation}
and we set
$$
z_{2k}=P_{2k}y_{2k}.
$$

For this sequence of projections, containing subsequences $\{y_k\}$ and $\{z_{2k}\}$, we have
$$
\liminf_{n\to \infty} t_n\ge \frac{1}{2},
$$
since projections via $P_{2k}$ have $t_n\ge 1/2$ by (\ref{kissing_projection}).

5. At last we have to prove that the sequence  $\{y_k\}$ does not converge weakly, and hence the whole sequence of projections has no weak limit.

We have $|y_1-u|<\varepsilon_1$ by (\ref{alter_hund_trick_odd}) for $k=1$. Using (\ref{alter_hund_trick_even}) for $k=1$, we get
$$
|y_2-e_2| \le \left|(P_DP_{C_2})^{m_2}(y_1-u)\right|+ \left|(P_DP_{C_2})^{m_2}u-e_2\right|<\varepsilon_1+\varepsilon_2,
$$
which together with (\ref{kissing_point}) implies that
$$
|z_2-e_2|=|P_2y_2-e_2|\le |P_2(y_2-e_2)|+|P_2e_2-e_2|<\varepsilon_1+\varepsilon_2+3\sqrt{\varepsilon_2}.
$$

In the same way, by induction on $k$, we get
$$
|y_{2k-1}-u|<\sum_{\nu=1}^{2k-1} \varepsilon_\nu+3\sum_{\nu=1}^{k-1} \sqrt{\varepsilon_{2\nu}},
$$
$$
|y_{2k}-e_{k+1}|<\sum_{\nu=1}^{2k} \varepsilon_\nu+3\sum_{\nu=1}^{k-1} \sqrt{\varepsilon_{2\nu}},
$$
$$
|z_{2k}-e_{k+1}|<\sum_{\nu=1}^{2k} \varepsilon_\nu+3\sum_{\nu=1}^{k} \sqrt{\varepsilon_{2\nu}}.
$$

Consequently,
\begin{equation}\notag
\begin{split}
|y_{2k-1}-u|<0.4 &\Rightarrow \langle y_{2k-1},u\rangle >0.6\\
|y_{2k}-e_{k+1}|<0.4 & \Rightarrow \langle y_{2k},u\rangle <0.4
\end{split}
\end{equation}
and the sequence  $\{y_n\}$ does not converge weakly.
\end{proof}

The following statement is  a parallel to  Remark~\ref{ball}~(a). We consider here weak convergence instead of  norm convergence. The intersection of the sets is not contained in any affine hyperplane in place of having non-empty interior.

\begin{remark}\label{spanning set}
Let $\{C_{\alpha}\}$ be a family of closed convex subsets of a Hilbert space $H$. Assume that the affine hull of the intersection $C=\bigcap C_\alpha$
is dense in $H$.
Then the sequence (\ref{rem}) of remote projections converges weakly for each starting element $x_0\in H$ and for any  sequence $\{t_n\}$ of weakness parameters.
In particular, random projections converge weakly in this case.  
\end{remark}
 \begin{proof} Fix a point $a\in C$. Then $\overline{{\rm span\,}}\{v-a: v\in C\}=H$.
 For each $v\in C$ and any $n\in {\mathbb N}$, we have $v\in C_{\alpha(n)}$, hence $\angle vx_{n+1}x_n\ge \pi/2$, so that $|x_n-v|\ge |x_{n+1}-v|$, and the  decreasing sequence
\begin{equation}\label{x_n-v}
  |x_n-v|^2=  |x_n-a|^2-2\langle x_n-a, v-a \rangle    +|v-a|^2
\end{equation}
has a limit. In particular, the sequence $\{|x_n-a|^2\}$ has a limit, which together with (\ref{x_n-v}) implies that the sequence of  scalar products
$$
\langle x_n-a, v-a \rangle
$$
has a limit as well. The sequence  $\{x_n-a\}$ is  bounded and the set  ${\rm span\,} \{v-a: v\in C\}$ is dense in $H$, hence the sequence $\{x_n-a\}$ converges weakly, and so does the sequence $\{x_n\}$.
\end{proof}

Dye and Reich~\cite{DR} proved weak convergence of random projections on a finite family of closed convex sets that are all WIP sets with respect to their common point, see also~\cite{BK22}.  Such sets are uniformly quasi-symmetric with respect to this point by Remark~\ref{quasisym}.
We wonder if Theorem~\ref{theorem2} and Theorem~\ref{theorem3} can be clarified under the additional condition of uniform quasi-symmetry of the sets $C_\alpha$. We also note that the problem of weak convergence of random projections, that is, remote projections with arbitrary $t_n$'s, onto a finite family of closed convex sets having nonempty intersection is still open~\cite{BK22}.

\section*{Acknowledgements}
We thank S.\,Reich, V.N.\,Temlyakov, and V.I.\,Bogachev  for  fruitful discussions.

\end{document}